\documentclass[11pt,mathserif]{amsart}
\usepackage[utf8]{inputenc}
\usepackage{eulervm}
\usepackage{amsmath}
\usepackage{amsthm}
\usepackage{import}
\usepackage{geometry}
\usepackage{mathrsfs} 
\usepackage{amsfonts}
\usepackage{amssymb}
\usepackage{palatino,color}
\usepackage{hyperref}
\usepackage{float}
\usepackage[font=small,skip=-10pt]{caption}
\usepackage{comment,accents}
\usepackage[export]{adjustbox}
\usepackage{enumerate,graphicx,enumitem,enumerate}

\newtheorem{mydef}{Definition}[section]

\newtheorem{prop}{Proposition}[section]
\newtheorem{theorem}{Theorem}[section]
\newtheorem{lemma}{Lemma}[section]

\newtheorem{claim}{Claim}[section]

\newcommand{\spsp}{\mathcal{S}_3}
\newcommand{\De}{\mathrm{d}}

\newcommand{\bft}{\mathcal{T}}
\newcommand{\velmap}{\mathbb{V}}
\newcommand{\tsp}{\mathbf{T}}
\newcommand{\covdev}{\nabla^{W_2} }



\newcommand{\R}{\mathbb{R}}

\newcommand{\RD}{\mathbb{R}^d}


\newcommand{\cC}{\mathcal{C}}

\newcommand{\cP}{\mathcal{P}}




\newcommand{\bes}{\begin{equation*}}
\newcommand{\ees}{\end{equation*}}
\newcommand{\beas}{\begin{eqnarray*}}
\newcommand{\eeas}{\end{eqnarray*}}
\newcommand{\bea}{\begin{eqnarray}}
\newcommand{\eea}{\end{eqnarray}}
\newcommand{\be}{\begin{equation}}
\newcommand{\ee}{\end{equation}}


\newcommand{\bbX}{\mathbb{X}}



\newcommand{\bbl}{\begin{block}}
\newcommand{\ebl}{\end{block}}

\newcommand{\hP}{\hat{P}}
\newcommand{\hgamma}{\hat{\gamma}}
\newcommand{\hpi}{\hat{\pi}}
\newcommand{\bbV}{\mathbb{V}}
\title{Measure-valued spline curves: an optimal transport viewpoint}
\author{ Yongxin Chen, Giovanni Conforti and Tryphon Georgiou}
\begin{document}
	\maketitle
	
	\begin{abstract}
The aim of this article is to introduce and address the problem to smoothly interpolate (empirical) probability measures. To this end, we lift the concept of a spline curve from the setting of points in a Euclidean space that that of probability measures, using the framework of optimal transport. 
\end{abstract}
	
\section{Introduction}
Consider a collection of (empirical) probability distributions 
\[
(\rho_i)_{i=0,1,\ldots,N},
\]
that are specified at a number of successive points in time $0=t_0< t_1 \ldots < t_N=1$. From an engineering standpoint, such distributions may represent density of particles, concentration of pollutants, image intensity, power distribution, etc., associated with some underlying time-varying physical process. In pertinent application areas, invariably, the goal is to interpolate the available data-set so as, e.g., to estimate the spread of a particle beam or the potential spread of polutants in-between reference points, to resolve features between successive slices in magnetic resonance imaging, and so on. Thus, our aim is to construct in a systematic manner a measure-valued curve which interpolates \emph{smoothly} a data-set that consists of successive probability distributions, and to develop suitable computational tools for this purpose.

In a classical setting, where our data-set consists of points $(x_i)_{i=0,1,\ldots,N}$ in $\RD$, a natural choice is to interpolate with a smooth curve such as a cubic spline. This motivates us to seek a suitable generalization of spline curves from the Euclidean setting to {\em measure-valued spine curves} on the Wasserstein space of probability measures. We achieve this by adopting a variational formulation of splines due to Holladay \cite{holladay1957smoothest}, that the spline-curve in Euclidean space minimizes mean-squared acceleration among all other interpolants, to the setting of optimal transport theory.

Besides certain expected parallels to classical splines, measure-valued splines enjoy a number of interesting structural properties which mirror other well known properties of optimal transport. In particular, we show that the construction of measure-valued splines relates to a multimarginal optimal transportation problem (see \cite{gangbo1998optimal,pass2015multi}), and we discuss the existence of Monge-like solutions for an extended (relaxed) formulation of the multimarginal optimal transport problem. We also provide a heuristic fluid dynamic formulation for splines, which may be regarded as the counterpart to the Benamou-Brenier formulation of the Monge-Kantorovich problem. As an illustrative example, we expand on the case where the $\rho_i$'s are Gaussian measures. In this case the one-time marginal distributions are Gaussian at all times and the measure-valued splines can be explicitly computed by solving a semidefinite program. Lastly, based on the fact put forward by Otto \cite{otto2001geometry} that we may regard the Wasserstein space as an almost-Riemannian infinite dimensional manifold, we discuss an alternative approach to constructing measure-valued splines and provide a formal argument showing that the original optimization problem to define splines is in fact a relaxation of the one stemming from this Riemannian viewpoint.

The results of this article should be considered as a first step towards developing a toolbox for interpolation in the space of probability measures; some of the most basic elements of the theory are proven rigorously whereas only formal arguments are given for other claims. In light of the range of potential applications, besides resolving certain open questions that are raised, future work may need to focus on more general \emph{smoothing} splines, or B-splines, as well as on developing fast and efficient computational algorithms.

\subsubsection*{Organization of the paper}

In Section \ref{sec:formulation}, we define the notion of spines in Wasserstein space by emulating its well known Euclidean counterpart.
Section \ref{sec:measuresplines} explores the structure of such measure-valued splines and, in particular, points out that the measure (which is the sought matrix-valued spline) is concentrated
on ordinary $C^2$ spline curves. It also presents alternative formulations (e.g., in phase space) as well as discusses the question of Monge solutions.
Section \ref{sec:fluid} presents yet another formulation that is analogous to the Benamou-Brenier fluid dynamical formulation of standard Optimal Mass transport.
Section \ref{sec:riemannian} explores yet another angle of viewing measure-valued splines. It relies on Otto calculus on Wasserstein space and brings out the problem to minimize acceleration subject to constraints.
Section \ref{sec:proofs} contains proofs of the main results.
We conclude by specializing Wasserstein-spline interpolation to Gaussian data in Section \ref{sec:gaussian} and we highlight the typical outcome with examples that are presented in the final section, Section \ref{sec:numerical}.

\subsubsection*{Notation}
We introduce here notation which we use throughout the paper. For $k \geq 0$ and integer, we denote the set of functions $X:[0,1] \rightarrow \RD$ which are continuous and $k$ times continuously differentiable by $ C^k([0,1];\RD)$ and abbreviate by $C^k$. The set of functions which are $k$ times differentiable and whose $k$-th derivative is square-integrable we denote by  $H^k([0,1];\RD)$, abbreviated by $H^k$. Splines are, by definition, twice continuously differentiable and piecewise cubic polynomials.
 Thus, for a fixed sequence $\mathcal{T}:=(t_i)_{i=0,\ldots,N}$ with $0=t_0<t_1<\ldots<t_N=1$ 
 we denote by $\Pi_3([t_{i},t_{i+1}])$ the set of  $\RD$-valued cubic polynomials defined on the interval $[t_{i},t_{i+1}]$ and the corresponding set of splines 
\bes \spsp := \Big\{ X \in C^2([0,1];\RD) :  X \big |_{[t_{i},t_{i+1}]} \in \Pi_3([t_{i},t_{i+1}]) \quad \forall i=0,\ldots, N-1 \Big\}.
\ees

We denote by $\mathcal{P}(\Omega)$ the space of probability measures over a measurable space $\Omega$ and by $\cP_2(\RD)$ the subset of the elements of $\cP(\RD)$ having finite second moment. We will often choose $\Omega=C^0$, which we equip with the canonical sigma algebra generated by the projection maps $(X_t)_{t \in [0,1]}$, defined by 
 \[ \forall \omega \in \Omega, \quad X_t(\omega)=\omega_t. \]
 If $\mathcal{T}=(t_i)_{i=0,\ldots,N}$ is a finite set of times, we denote $X_{\bft}$ the vector $(X_{t_0},X_{t_1},\ldots, X_{t_N})$. 
Finally, if $T$ is a map and $\mu$ a probability measure, we denote $T_{\#}\mu$ the push forward of $\mu$ under $T$.
 
\section{Problem formulation}\label{sec:formulation}

We now draw on the analogy between curve fitting in finite-dimensions and interpolation in the Wasserstein space to define our problem of constructing smooth trajectories (splines) in the Wasserstein space.

\subsection{ Natural interpolating splines in $\R^d$}
Let $\mathcal{T}:=(t_i)_{i=0,\ldots,N}$ with $0=t_0<t_1<\ldots<t_N=1$ be an array of time-data, and $(x_i)_{i=0,\ldots,N}$ be a sequence of spatial data in $\RD$. The natural interpolating spline for the data is the only $S\in\spsp$ such that $S_{t_i} = x_i$ for $0 \leq i \leq N$ and whose second derivative vanishes at $t=0,1$. Holladay's Theorem \cite{holladay1957smoothest} tells that the variational problem
\begin{subequations}\label{eq:spline}
\begin{align}
 &{}&\inf_X \int_0^1 |\ddot{X}_t|^2 dt\\
&{}&X \in H^2, \\
&&\ X_{t_i} = x_i,  \quad i=0,\ldots,N.
\end{align}
\end{subequations}
 admits as unique solution  the natural interpolating spline for the data $(t_i,x_i)_{i=0,\ldots,N}$, which we denote
 $S(x_0,\ldots, x_N)$.  We do not emphasize the dependence on the time data $\mathcal{T}$, as they are kept fixed throughout the article.
 Also, we denote $\spsp^0\subset\spsp$ the set of all natural splines
 \[\spsp^0= \{ S(x_0,\ldots, x_N) : (x_0,\ldots,x_N) \subseteq \R^{d\times (N+1)} \}. \]

\subsection{Interpolating splines in $\cP_2(\RD)$}\label{sec:formulation}

Starting from the given data $(t_i,\rho_i)_{i=0,..,N}$, with $0=t_0< t_1 \ldots <t_N=1$ and $ \{\rho_{0}, \ldots \rho_{N} \} \subseteq \cP_2(\RD)$,  inspired by Holladay's theorem and with an optimal transport viewpoint, we view the problem of interpolating smoothly the data as
\begin{center}
`` the problem of transporting the mass configuration $\rho_{0}$ into the mass\\configuration $\rho_{i}$ at time $t_i$ while minimizing mean-squared acceleration."
\end{center}
To propose a model, we make the following observations motivated by the above informal description of our problem.
\begin{itemize}
\item We view a transport plan as a probability measure $P\in \cP(\Omega)$, where $\Omega=C^0$ and  for $A \subseteq \Omega$, $P(A)$ represents the total mass which flows along the paths in $A$.
\item For a plan to be admissible, it must be that at time $t_i$, the mass configuration induced by $P$ is $\rho_i$. Thus, we ask that
\[ (X_{t_i})_{\#}P= \rho_i, \quad i =0,\ldots,N. \]
\item Since we consider acceleration (of a curve in Wasserstein space), we ask that an admissible plan $P$  is such that $P(H^2)=1$.
\item Since we penalize acceleration, we need to consider the mean-square acceleration\footnote{When $(X_t)_{t\in [0,1]}$ is the canonical process, we denote the acceleration $\partial_{tt}X_t$ instead of $\ddot{X}_t$.} 
\begin{equation}\label{eq:Kantorovich_cost}
 \int_0^1\int_{\Omega}|\partial_{tt}X_t|^2 \, \De P  \De t
\end{equation}
of an (admissible) plan $P$.
\end{itemize}

We are now in the position to define measure-valued spline curves.
\begin{mydef}
Let $(t_i,\rho_i)_{i=0,\ldots,N} \subset [0,1]\times \cP_2(\RD)$ be given data.  Consider the problem
\begin{subequations}\label{MSP}
\begin{align}
&{}&\label{MSPa} \inf_{P}   \int_0^1\int_{\Omega} |\partial_{tt}X_t|^2 \De P \, \De t \\
&&\label{MSPb} P \in \cP(\Omega),P(H^2)=1\\
 &&\label{MSPc} (X_{t_i})_{\#}P = \rho_i, \quad i=0,\ldots,N. 
\end{align}
An {\em interpolating spline } for the data $(t_i,\rho_i)_{i=0,\ldots,N}$ is defined to be the marginal flow $(\rho_t)$ of an optimal measure for \eqref{MSP}.
\end{subequations}
\end{mydef}
We remark that, if instead of taking the second derivative in \eqref{MSPa} we take the first derivative, then problem \eqref{MSP} is an equivalent formulation of Monge-Kantorovich problems within each time interval $[t_i,t_{i+1}]$.
Also we note that, in general, we cannot guarantee uniqueness for the optimal measure in \eqref{MSP}. Thus, the above definition may not define a \emph{natural} interpolating spline without additional hypothesis on the data (so as to ensure uniqueness).

\subsection{Compatibility}

As a first result we have that the definition we gave is compatible with that of splines in $\RD$.  
\begin{prop}\label{prop:compatibility}
Let$(t_i,x_i)_{i=0.\ldots,N} \subset [0,1] \times \RD$, and set $\rho_{i}: = \delta_{x_i}$ for $0 \leq i \leq N$. Then the unique optimal solution of \eqref{MSP} is 	\[ P^* = \delta_{S}, \]
where $S$ is the natural interpolating spline for $(t_i,x_i)_{i=0,\ldots,N}$.
\end{prop}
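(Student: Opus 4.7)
The plan is to reduce the variational problem \eqref{MSP} in the Dirac-data case to the classical Holladay problem via Fubini, and then invoke uniqueness of the natural interpolating spline.

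First, I would verify admissibility of the candidate $P^\ast=\delta_S$: since $S\in\spsp\subset C^2\subset H^2$, we have $P^\ast(H^2)=1$, and since $S_{t_i}=x_i$ we get $(X_{t_i})_\#\delta_S=\delta_{x_i}=\rho_i$ for every $i$. The objective value at $P^\ast$ is exactly $\int_0^1|\ddot S_t|^2\,\mathrm{d}t$, which by Holladay's theorem is the infimum of $\int_0^1|\ddot X_t|^2\,\mathrm{d}t$ over $X\in H^2$ subject to $X_{t_i}=x_i$.

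Next I would show $P^\ast$ is optimal. Let $P$ be any admissible plan. Because $|\partial_{tt}X_t|^2\ge 0$, Tonelli gives
\[
\int_0^1\!\!\int_\Omega |\partial_{tt}X_t|^2\,\mathrm{d}P\,\mathrm{d}t=\int_\Omega\!\Big(\int_0^1|\partial_{tt}\omega_t|^2\,\mathrm{d}t\Big)\mathrm{d}P(\omega),
\]
where the inner integral is set to $+\infty$ if $\omega\notin H^2$; by the constraint $P(H^2)=1$ this exceptional set is $P$-null. The marginal constraints $(X_{t_i})_\#P=\delta_{x_i}$ force $\omega_{t_i}=x_i$ for $P$-a.e.\ $\omega$, simultaneously over the finite set of times $i=0,\ldots,N$. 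Hence for $P$-a.e.\ $\omega$, the path $\omega$ is an admissible competitor for the Holladay problem, so $\int_0^1|\partial_{tt}\omega_t|^2\,\mathrm{d}t\ge \int_0^1|\ddot S_t|^2\,\mathrm{d}t$. Integrating against $P$ yields the lower bound, attained by $P^\ast$.

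Finally, for uniqueness I would use the strict version of Holladay's theorem: the natural spline $S$ is the \emph{unique} minimizer in $H^2$ subject to the interpolation constraints, so $\int_0^1|\partial_{tt}\omega_t|^2\,\mathrm{d}t> \int_0^1|\ddot S_t|^2\,\mathrm{d}t$ for every admissible $\omega\neq S$. Therefore if $P$ is optimal, the $P$-a.e.\ identity $\omega=S$ must hold, i.e.\ $P=\delta_S$. The only real point requiring care is justifying the pointwise application of Holladay (i.e.\ that the $P$-null sets corresponding to the different marginal constraints can be discarded simultaneously, which is immediate as $N$ is finite), together with the fact that the Holladay problem has a strictly convex objective on the affine constraint set, giving the needed strict inequality; no estimate more delicate than this is required.
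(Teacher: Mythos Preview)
Your proof is correct and is essentially the argument the paper uses: the paper derives this proposition as the special case of Theorem~\ref{thm:splinestruc} in which $\Pi(\delta_{x_0},\ldots,\delta_{x_N})=\{\delta_{(x_0,\ldots,x_N)}\}$, and the proof of that theorem proceeds exactly via Fubini together with the pointwise Holladay inequality (see \eqref{eq:splinestruc2}--\eqref{eq:splinestruc3}), with uniqueness coming from the strict inequality when $\omega\neq S$. Your direct argument is precisely that proof specialized to Dirac data, bypassing the multimarginal reformulation.
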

We shall see that the above proposition is a special case of Theorem \ref{thm:splinestruc} below.

\section{The structure of measure-valued splines}\label{sec:measuresplines}
 
\subsection{Decomposition of optimal solutions}
 The following theorem asserts that at least an optimal solution for \eqref{MSP} exists and gives details about the structure of the solution. In the present article, we do not establish uniqueness of the measure-valued spline through a given data set; this interesting question remains open for further investigation. In words, Theorem \ref{thm:splinestruc} says that any optimal solution is supported on splines of $\RD$, and that its joint distribution at times $t_0,\ldots,t_N$ solves a multimarginal optimal transport problem whose cost function $\mathcal{C}$ is the optimal value in \eqref{eq:spline}, i.e.
\be\label{eq:multimargcost}
\mathcal{C}(x_0,\ldots,x_N) := \int_{0}^1 |\partial_{tt}S_t(x_0,\ldots,x_N)|^2\De t.
\ee
Thus a spline curve on $\cP_2(\RD)$ is found by pushing forward through splines of $\RD$ the solution of a multimarginal optimal problem. This is in analogy with the well known fact that the geodesics of $\cP_2(\RD)$ are constructed pushing forward the optimal coupling of the Monge-Kantorovich problem through geodesics of $\RD$\,(\cite[Theorem 2.10]{ambrosio2013user}).  In the statement of the theorem, as usual, we set 
\[ \Pi(\rho_0,\rho_1,\ldots,\rho_N) = \left\{ \pi \in \cP(\RD \times \ldots  \times \RD) : (X_{i})_{\#} \pi = \rho_i \right\}, \]
where we denoted by $X_i$ the $i$-th coordinate map on $(\RD)^{ N+1}$, i.e.  $X_i(x_0,\ldots,x_N) =x_i$.

\begin{theorem}\label{thm:splinestruc}
Let $\{\rho_0,\ldots,\rho_N \}\subseteq \cP_2(\RD)$. Then there exists at least an optimal solution for \eqref{MSP}. Moreover, the following are equivalent
\begin{enumerate}[label= (\roman*),ref= (\roman*)]
\item\label{item(i)} $\hat{P}$ is an optimal solution for \eqref{MSP}.
\item\label{item(ii)}  $\hat{P}(\spsp^0)=1$ and $\hat{\pi}:= (X_{\bft})_{\#} \hat{P}$ is an optimal solution for 
\bea\label{eq:mulitmargprob}
&{}& \inf_{\pi}\int \mathcal{C}(x_0,x_1,\ldots,x_N) \De \pi \\
\nonumber &{}& \pi \in \Pi(\rho_0,\rho_1,\ldots,\rho_N), 
\eea
where $\mathcal{C}$ has been defined at \eqref{eq:multimargcost}.
\end{enumerate}
\end{theorem}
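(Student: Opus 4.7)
The plan is to reduce \eqref{MSP} to the multimarginal problem \eqref{eq:mulitmargprob} via disintegration along the knot evaluation map $X_{\bft}\colon\Omega\to(\RD)^{N+1}$ and a fiber-wise application of Holladay's theorem. Concretely, given any admissible $P$, I would write $P=\int P^{x_0,\ldots,x_N}\,\De\hat\pi$ with $\hat\pi:=(X_{\bft})_\#P$; the marginal constraint \eqref{MSPc} forces $\hat\pi\in\Pi(\rho_0,\ldots,\rho_N)$, and $P^{x_0,\ldots,x_N}$ is concentrated on $H^2$-paths passing through $(x_0,\ldots,x_N)$ at the knot times. Applying Holladay's theorem on each fiber and using Fubini yields
\[
\int_0^1\!\!\int_\Omega|\partial_{tt}X_t|^2\,\De P\,\De t \;\geq\; \int \cC(x_0,\ldots,x_N)\,\De\hat\pi \;\geq\; \inf\eqref{eq:mulitmargprob},
\]
with equality in the first inequality iff $P^{x_0,\ldots,x_N}=\delta_{S(x_0,\ldots,x_N)}$ for $\hat\pi$-a.e.\ $(x_0,\ldots,x_N)$, equivalently $P(\spsp^0)=1$ and $P=S_{\#}\hat\pi$, where $S\colon(\RD)^{N+1}\to C^0$ denotes the natural-spline map.

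For the matching upper bound I would take an arbitrary $\pi\in\Pi(\rho_0,\ldots,\rho_N)$ and set $P:=S_{\#}\pi$. Since $S$ is continuous (in fact linear) in its data, $P\in\cP(\Omega)$ is well defined, automatically $P(\spsp^0)=1$, the knot marginals are the prescribed $\rho_i$, and by construction
\[
\int_0^1\!\!\int_\Omega|\partial_{tt}X_t|^2\,\De P\,\De t \;=\; \int \cC(x_0,\ldots,x_N)\,\De\pi.
\]
Together with the previous chain this gives $\inf\eqref{MSP}=\inf\eqref{eq:mulitmargprob}$ and the equivalence \ref{item(i)}$\Leftrightarrow$\ref{item(ii)}: if $\hat P$ is optimal for \eqref{MSP}, both inequalities in the lower bound must be equalities, which forces $\hat P(\spsp^0)=1$ and optimality of $\hat\pi=(X_{\bft})_{\#}\hat P$; conversely, any $\hat P$ meeting \ref{item(ii)} attains the common infimum by the displayed identity, hence is optimal.

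Existence then reduces to existence for the multimarginal problem, which I would handle by standard Prokhorov plus lower-semicontinuity: $\Pi(\rho_0,\ldots,\rho_N)$ is nonempty (e.g.\ $\rho_0\otimes\cdots\otimes\rho_N$) and tight because each $\rho_i\in\cP_2(\RD)$; the cost $\cC$ is a non-negative, continuous, quadratic polynomial on $(\RD)^{N+1}$ since the natural spline depends linearly on its interpolation data; so $\pi\mapsto\int\cC\,\De\pi$ is weakly lower semicontinuous and a minimizer $\hat\pi^{*}$ exists. Pushing forward by $S$ then produces an optimizer for \eqref{MSP}.

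The main obstacle I anticipate is the rigorous execution of the disintegration/Holladay step. One must check measurability of $H^2\subset C^0=\Omega$ (so that $P(H^2)=1$ and the cost integrand make sense), produce a regular conditional kernel $(x_0,\ldots,x_N)\mapsto P^{x_0,\ldots,x_N}$ supported on the affine slice $\{\omega:\omega_{t_i}=x_i\ \forall i\}$, and most delicately, exploit the \emph{uniqueness} clause of Holladay's theorem to upgrade the fiber-wise inequality to the identification $P^{x_0,\ldots,x_N}=\delta_{S(x_0,\ldots,x_N)}$ in the equality case; this is what promotes ``$P$ is supported on splines on average'' to the almost-sure statement $P(\spsp^0)=1$. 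The remaining steps (continuity/linearity of $S$, weak compactness of the multimarginal constraint set, Fubini on the cost) are comparatively routine.
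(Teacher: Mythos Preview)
Your proposal is correct and follows essentially the same route as the paper: both establish the lower bound via Holladay's theorem applied to paths, the matching upper bound via the push-forward $S_{\#}\pi$, and existence by reducing to the multimarginal problem (which has a continuous quadratic cost) and then lifting through $S$.

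The one noteworthy difference is that you organize the lower bound through disintegration along $X_{\bft}$, whereas the paper simply observes that for every $\omega\in H^2$ one has the pathwise inequality
\[
\int_0^1 |\partial_{tt}\omega_t|^2\,\De t \;\ge\; \mathcal{C}(\omega_{t_0},\ldots,\omega_{t_N}),
\]
with equality iff $\omega=S(\omega_{t_0},\ldots,\omega_{t_N})$, and integrates this directly against $P$. This bypasses the regular-conditional-probability step you flag as your main obstacle: no fiber measures are needed, and the equality case (strict inequality on $\Omega\setminus\spsp^0$, hence $\hat P(\spsp^0)=1$ for any optimizer) follows from Holladay's uniqueness clause alone. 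For the direction \ref{item(i)}$\Rightarrow$\ref{item(ii)} the paper runs a short contradiction using the kernel $(x_0,\ldots,x_N)\mapsto\delta_{S(x_0,\ldots,x_N)}$, which is just your $S_{\#}$ construction rephrased; so the arguments are really the same, with yours packaged slightly more abstractly.
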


Multimarginal optimal transport problems, such as the one in \eqref{eq:mulitmargprob}, can be solved numerically using iterative Bregman projections \cite{BenCar15}. However, this approach is computational burdensome for high dimensional distributions or large number of marginals. In the special case where the marginals are Gaussian distributions, a numerically efficient semidefinite programming (SDP) formulation is possible (see Section \ref{sec:gaussian}). 

\subsection{Formulation of the problem in phase space}\label{sec:phase_space}
One aspect of the cost $\mathcal{C}$ which complicates the tractability of \eqref{eq:mulitmargprob} is that, to the best of our knowledge,  there is no closed form expression valid for any $N$. For this reason, we propose a second, equivalent formulation of \eqref{eq:mulitmargprob} in a larger space with an explicit cost function. Note that a very simple reformulation of \eqref{MSP} can be obtained by looking into ``phase space". Here we consider probability measures on the product space $ H^1 \times H^1$, where we define canonical projection maps $(X_t)_{t \in [0,1]}$ and $(V_t)_{t \in [0,1]}$ in the obvious way. The problem 

\begin{subequations}\label{pathphasesp}
\begin{align}
\inf_{Q}\int_{0}^1 \int_{\Omega \times \Omega} | \partial_t V_t |^2 \De Q \De t\\
Q \in \cP\left(   \Omega \times \Omega  \right), Q(H^1 \times H^1)=1, \\
 Q(\partial_t X_t = V_t \ \forall t \in [0,1] )=1,\\
  \ (X_{t_i})_{\#}Q = \rho_i \quad  i =0, \ldots,N,
\end{align}
\end{subequations}
is easily seen to be equivalent to \eqref{MSP}. The interesting fact is that, the multimarginal optimal transport problem associated with \eqref{pathphasesp} has an explicit cost function.
 All relies on the following representation of $\mathcal{C}$ as the solution of a minimization problem.
\begin{lemma}\label{lm:costref}
Let $(x_i,v_i)_{i=0,\ldots,N} \subset \RD \times \RD$ be given. The optimal value of the problem
\begin{subequations}\label{eq:spline+vel}
\begin{align}
&{}&\inf_{X,V} \int_0^1 |\dot{V}_t|^2 dt\\
&{}&(X,V) \in H^1 \times  H^1,  \\
&& \dot{X}_t=V_t, \quad \forall t \in [0,1],\\
&& X_{t_i} = x_i, \quad i=0,\ldots , N, \\
&& V_{t_i} = v_i, \quad i=0,\ldots, N.
\end{align}
\end{subequations}
is given by
\be\label{eq:explicitcost}
\sum_{i=0}^{N-1} (t_{i+1}-t_i)^{-1} c( x_i,x_{i+1},v_i,v_{i+1} )
\ee
where 
\be\label{eq:explicitcost_formula}
c(x_i,x_{i+1},v_i,v_{i+1})= 12|x_{i+1} - x_{i} -v_i|^2 - 12 \langle x_{i+1} - x_{i} -v_i,\, v_{i+1}-v_{i} \rangle + 4 | v_{i+1}-v_{i}|^2.
\ee
 In particular,
\be\label{eq:costref1}
\mathcal{C}(x_0,\ldots,x_N) = \inf_{v_0, \ldots v_N \in \RD} \sum_{i=0}^{N-1} (t_{i+1}-t_i)^{-1} c( x_i,x_{i+1},v_i,v_{i+1} )
\ee
and the infimum in \eqref{eq:costref1} is attained and is unique.
\end{lemma}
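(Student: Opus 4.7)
The plan is to exploit the fact that the constraints in \eqref{eq:spline+vel} couple $(X,V)$ only through $\dot X = V$ and only at the finitely many knots $t_0,\ldots,t_N$. Eliminating $V$ via $V=\dot X$, the problem becomes the minimization of $\int_0^1 |\ddot X|^2\,dt$ over $X\in H^2$ subject to $X(t_i)=x_i$ and $\dot X(t_i)=v_i$ for each $i$. Since these constraints involve only pointwise values of $X$ and $\dot X$ at the knots, the infimum decouples into a sum of $N$ independent Hermite interpolation problems, one on each interval $[t_i,t_{i+1}]$.

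Next, I would solve the single-interval problem: given Hermite data $(x_i,x_{i+1},v_i,v_{i+1})$ on $[t_i,t_{i+1}]$, minimize $\int_{t_i}^{t_{i+1}}|\ddot X|^2\,dt$ over $H^2$. The Euler--Lagrange equation reads $X^{(4)}=0$, so any minimizer is a cubic polynomial; since four coefficients are fixed by four boundary conditions, both existence and uniqueness follow. Optimality is confirmed by the standard orthogonality argument: for any admissible perturbation $\eta$ (vanishing together with $\dot\eta$ at both endpoints), two integrations by parts yield $\int_{t_i}^{t_{i+1}} \ddot X^*\cdot\ddot\eta\,dt = 0$. Then writing the minimizing cubic explicitly as $x_i + v_i(t-t_i) + \alpha_i (t-t_i)^2 + \beta_i(t-t_i)^3$, solving the resulting $2\times 2$ linear system for $(\alpha_i,\beta_i)$, and computing $\int_0^{h_i}|2\alpha_i + 6\beta_i s|^2\,ds$ (with $h_i:=t_{i+1}-t_i$) yields the claimed expression for $(t_{i+1}-t_i)^{-1} c(x_i,x_{i+1},v_i,v_{i+1})$, with the prefactor $(t_{i+1}-t_i)^{-1}$ arising naturally from the $h_i$ scaling.

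For identity \eqref{eq:costref1}, the plan is to match two inequalities. For the upper bound on $\mathcal{C}$, observe that for any $(v_0,\ldots,v_N)$ the concatenation of the associated Hermite cubics is a $C^1$ piecewise-cubic curve, hence lies in $H^2$ and is admissible in \eqref{eq:spline}; this gives $\mathcal{C}(x_0,\ldots,x_N)\le\sum_i(t_{i+1}-t_i)^{-1}c(x_i,x_{i+1},v_i,v_{i+1})$. For the matching lower bound, take the natural interpolating spline $S^*\in\spsp^0$ and define $v_i^*:=\dot S^*(t_i)$; since $S^*|_{[t_i,t_{i+1}]}$ is the unique cubic with Hermite data $(x_i,x_{i+1},v_i^*,v_{i+1}^*)$, summing the explicit formula across intervals shows that the value of the sum at $v^*$ equals $\mathcal{C}(x_0,\ldots,x_N)$. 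Uniqueness of the infimizing $v$ reduces to strict convexity, which I would verify by setting all $x_i=0$ and showing that each interval's contribution becomes a positive-definite quadratic form in $(v_i,v_{i+1})$ (the relevant $2\times 2$ coefficient matrix has positive diagonal and strictly positive determinant); summing over intervals preserves strict positive definiteness in $(v_0,\ldots,v_N)$.

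The main obstacle is purely computational: the algebra that expresses $(\alpha_i,\beta_i)$ in terms of $A:=x_{i+1}-x_i-v_i h_i$ and $B:=v_{i+1}-v_i$, and the subsequent integration of $|2\alpha_i+6\beta_i s|^2$ over $[0,h_i]$, must be carried out carefully so that the various powers of $h_i$ combine correctly to give the stated prefactor. All other ingredients (decoupling, existence on each interval, and strict convexity in $v$) are standard once this reduction is in place.
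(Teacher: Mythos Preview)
Your approach is correct and follows essentially the same route as the paper: reduce to independent single-interval Hermite problems, identify the minimizer on each interval as the unique cubic via an integration-by-parts (orthogonality) argument, compute the cost explicitly, and concatenate. The only difference is cosmetic: for \eqref{eq:costref1} the paper simply invokes Holladay's theorem, whereas you spell out the two matching inequalities and add an explicit strict-convexity argument for uniqueness of the optimal $(v_0,\ldots,v_N)$; this makes your version more self-contained but does not change the underlying logic.
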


We note that multimarginal optimal transport problems have been studied in \cite{carlier2010matching}
but for a cost of the form
\[  \mathcal{C}(x_0,\ldots,x_N) = \inf_{y \in Y} \sum_{i=0}^N c_i(x_i,y). \]
The main difference with the above is that $c$  in \eqref{eq:costref1} depends on both $x_i$ and $x_{i+1}$, which somewhat complicates the analysis; more details on this can be found in Section \ref{sec:gaussian}.

\begin{theorem}\label{thm:phasesplinestruc}
Let $\{\rho_0,\ldots,\rho_N \}\subseteq \cP_2(\RD)$. Then there exists at least an optimal solution for \eqref{pathphasesp}. Moreover, for an admissible plan $\hat{Q}$ the following are equivalent
\begin{enumerate}[label= (\roman*),ref= (\roman*)]
\item\label{item(i)phasestr} $\hat{Q}$ is an optimal solution for \eqref{pathphasesp}
\item\label{item(ii)phasestr}  $\hat{Q}(X \in \spsp^0)=1$ and $\hat{\gamma}:= (X_{\bft},V_{\bft})_{\#} \hat{Q}$ is an optimal solution for 
\begin{eqnarray}\label{eq:multimargphasespace}
c_{\rm opt}:=\inf_{\gamma} \sum_{i=0}^{N-1} (t_{i+1}-t_{i})^{-1} \int c(x_{i},x_{i+1},v_{i},v_{i+1} ) \De \gamma \\
\nonumber \gamma \in \Gamma( \rho_0, \ldots , \rho_N )
\end{eqnarray}
where $\Gamma(\rho_0,\ldots,\rho_N) $ is defined by
\begin{equation*}
\Gamma(\rho_0,\ldots,\rho_N):= \left\{ \gamma \in \mathcal{P}(\R^{d \times (N+1)} \times \R^{d \times (N+1)})  \,: (X_i)_{\#}\gamma = \rho_i \ \forall i =0,\ldots,N \right\}.
\end{equation*}
\end{enumerate}
\end{theorem}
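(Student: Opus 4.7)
The plan is to reduce Theorem~\ref{thm:phasesplinestruc} to Theorem~\ref{thm:splinestruc} together with Lemma~\ref{lm:costref} by setting up two parallel bijections: one between admissible plans for \eqref{MSP} and for \eqref{pathphasesp} that preserves the cost, and one between the two multimarginal problems \eqref{eq:mulitmargprob} and \eqref{eq:multimargphasespace} that equates their optimal values.

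First I would show that \eqref{pathphasesp} is equivalent to \eqref{MSP}. Given an admissible $P$ for \eqref{MSP}, the map $\omega \mapsto (\omega, \dot\omega)$ is defined $P$-a.s.\ (since $P(H^2)=1$) and pushing $P$ forward under it produces a plan $Q$ admissible for \eqref{pathphasesp} with identical cost. Conversely, for an admissible $Q$, the constraint $\partial_t X_t = V_t$ forces $V=\dot X$, so $P := (X)_\# Q$ is admissible for \eqref{MSP} with the same cost. Existence of optimizers for \eqref{pathphasesp} then follows immediately from existence in \eqref{MSP}, which is part of Theorem~\ref{thm:splinestruc}.

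Next I would relate the two multimarginals using Lemma~\ref{lm:costref}. For each $x=(x_0,\ldots,x_N)$, let $v^*(x)$ denote the unique minimizer in \eqref{eq:costref1}, so that $\mathcal{C}(x) = c_{\rm phase}(x, v^*(x))$ where $c_{\rm phase}(x,v) := \sum_i (t_{i+1}-t_i)^{-1} c(x_i,x_{i+1},v_i,v_{i+1})$. Uniqueness together with continuity of the quadratic cost yields measurability of $v^*$. From this: any $\pi \in \Pi(\rho_0,\ldots,\rho_N)$ lifts to $\gamma := (\mathrm{Id}, v^*)_\# \pi \in \Gamma(\rho_0,\ldots,\rho_N)$ with $\int c_{\rm phase}\, d\gamma = \int \mathcal{C}\, d\pi$; and any $\gamma \in \Gamma$ projects to $\pi := (X_{\bft})_\# \gamma \in \Pi$ with $\int c_{\rm phase}\, d\gamma \geq \int \mathcal{C}\, d\pi$ by fiberwise use of Lemma~\ref{lm:costref}. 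Consequently the optimal values of \eqref{eq:mulitmargprob} and \eqref{eq:multimargphasespace} agree.

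Combining these two reductions with Theorem~\ref{thm:splinestruc} shows that the optimal values of all four problems coincide. For (i)$\Rightarrow$(ii), if $\hat Q$ is optimal then $\hat P := (X)_\# \hat Q$ is optimal for \eqref{MSP}, hence $\hat Q(X \in \spsp^0) = 1$; and since a natural spline's knot velocities are exactly $v^*(X_\bft)$, the plan $\hat\gamma := (X_\bft, V_\bft)_\# \hat Q$ is the canonical lift of $(X_\bft)_\# \hat P$ and is therefore optimal for \eqref{eq:multimargphasespace}. For (ii)$\Rightarrow$(i), concentration on $\spsp^0$ forces equality in Lemma~\ref{lm:costref} along $\hat Q$-a.e.\ trajectory, so the cost of $\hat Q$ equals $\int c_{\rm phase}\, d\hat\gamma$, and optimality of $\hat\gamma$ closes the bound. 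The main technical obstacle I anticipate is verifying the compatibility between natural splines and the selector $v^*$ from Lemma~\ref{lm:costref}: one must check, via the first-order conditions in the knot velocities, that freely minimizing over $v$ in \eqref{eq:costref1} produces the $C^2$ gluing at interior knots and the vanishing second derivatives at $t_0,t_N$ characterizing $\spsp^0$, and that measurability of $x \mapsto v^*(x)$ is good enough to perform the coupling constructions. Once this identification is in hand, the rest of the argument is a mechanical transcription of the proof of Theorem~\ref{thm:splinestruc} to the phase-space setting.
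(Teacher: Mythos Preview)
Your proposal is correct and follows essentially the same approach as the paper, which simply states that in view of Lemma~\ref{lm:costref} the proof is a straightforward adaptation of that of Theorem~\ref{thm:splinestruc}. Your version makes this adaptation explicit by reducing to Theorem~\ref{thm:splinestruc} as a black box via the cost-preserving correspondences (between \eqref{MSP} and \eqref{pathphasesp}, and between the two multimarginal problems, the latter being Proposition~\ref{lem:pahsespaceform}); the technical point you flag about $v^*$ coinciding with the knot velocities of the natural spline is exactly what the last assertion of Lemma~\ref{lm:costref} (via Holladay's theorem) delivers.
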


 In the next proposition we show equivalence between the two multimarginal problems. There, we denote $\mathbb{V}$ the maps that associates to $(x_0,\ldots,x_N)$ the optimal solution of \eqref{eq:costref1} .  It is not hard to see that $\mathbb{V}$ is a linear map.

\begin{prop}\label{lem:pahsespaceform}
The problem \eqref{eq:multimargphasespace} is equivalent to the problem \eqref{eq:mulitmargprob} in the following sense:
\begin{enumerate}[label= (\roman*),ref= (\roman*)]
\item\label{item(i)phase} If $\hat{\gamma}$ is optimal for \eqref{eq:multimargphasespace} then 
\[ \hat{\pi}:= (X_0,\ldots,X_N)_{\#} \hat{\gamma}\]
 is optimal for \eqref{eq:mulitmargprob}. 
 \item\label{item(ii)phase} If $\hat{\pi}$ is optimal for \eqref{eq:mulitmargprob}, then 
 \[\hat{\gamma}:= (X_0,\ldots,X_N,\velmap(X_0,\ldots, X_N) )_{\#}\hat{\pi} \]
  is optimal for \eqref{eq:multimargphasespace}. 
\end{enumerate}
\end{prop}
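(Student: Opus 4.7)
The plan is to prove the two inequalities $\inf\eqref{eq:multimargphasespace} \le \inf\eqref{eq:mulitmargprob}$ and $\inf\eqref{eq:multimargphasespace} \ge \inf\eqref{eq:mulitmargprob}$ by exhibiting the explicit correspondence between feasible points via the map $\velmap$, and then read off \ref{item(i)phase} and \ref{item(ii)phase} as consequences of equality of the infima plus the chain of inequalities used in the proof.

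First, I would set up notation: write $J(\gamma):=\sum_i (t_{i+1}-t_i)^{-1}\int c(x_i,x_{i+1},v_i,v_{i+1})\,\De\gamma$ for the cost in \eqref{eq:multimargphasespace}, and $I(\pi):=\int \mathcal{C}(x_0,\ldots,x_N)\,\De\pi$ for the cost in \eqref{eq:mulitmargprob}. The key analytic input is Lemma \ref{lm:costref}: for every $(x_0,\ldots,x_N)$,
\[
\mathcal{C}(x_0,\ldots,x_N)\;\le\;\sum_{i=0}^{N-1}(t_{i+1}-t_i)^{-1}c(x_i,x_{i+1},v_i,v_{i+1})\qquad\forall (v_0,\ldots,v_N),
\]
with equality precisely when $(v_0,\ldots,v_N)=\velmap(x_0,\ldots,x_N)$. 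I would also note that linearity of $\velmap$ ensures it is Borel measurable, so that push-forwards through $\velmap$ are well defined.

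For the first inequality, given any $\gamma\in\Gamma(\rho_0,\ldots,\rho_N)$, the marginal $\pi:=(X_0,\ldots,X_N)_{\#}\gamma$ lies in $\Pi(\rho_0,\ldots,\rho_N)$ and integrating the pointwise bound of Lemma \ref{lm:costref} against $\gamma$ and disintegrating gives $I(\pi)\le J(\gamma)$. Taking infima yields $\inf\eqref{eq:mulitmargprob}\le\inf\eqref{eq:multimargphasespace}$. For the reverse direction, given any $\pi\in\Pi(\rho_0,\ldots,\rho_N)$, I would define $\gamma:=(X_0,\ldots,X_N,\velmap(X_0,\ldots,X_N))_{\#}\pi$. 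Its first $N+1$ marginals are those of $\pi$, hence $\gamma\in\Gamma(\rho_0,\ldots,\rho_N)$, and the equality case of Lemma \ref{lm:costref} together with the change-of-variables formula yields $J(\gamma)=I(\pi)$. This gives $\inf\eqref{eq:multimargphasespace}\le\inf\eqref{eq:mulitmargprob}$, so the two infima coincide.

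Finally I would conclude: for \ref{item(i)phase}, if $\hat{\gamma}$ is optimal for \eqref{eq:multimargphasespace}, the first part of the argument gives $I(\hat\pi)\le J(\hat\gamma)=\inf\eqref{eq:multimargphasespace}=\inf\eqref{eq:mulitmargprob}$, so $\hat\pi$ is optimal. For \ref{item(ii)phase}, if $\hat\pi$ is optimal for \eqref{eq:mulitmargprob}, the construction $\hat\gamma:=(X_0,\ldots,X_N,\velmap(X_0,\ldots,X_N))_{\#}\hat\pi$ gives $J(\hat\gamma)=I(\hat\pi)=\inf\eqref{eq:mulitmargprob}=\inf\eqref{eq:multimargphasespace}$, so $\hat\gamma$ is optimal. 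There is no real obstacle here — the only point that needs care is measurability of the map $\velmap$, which is immediate from its linearity, and the fact that push-forward through a map that only touches the velocity coordinates preserves the position-marginal structure.
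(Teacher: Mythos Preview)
Your proposal is correct and follows essentially the same approach as the paper: both rely on Lemma \ref{lm:costref} to obtain the pointwise inequality $\mathcal{C}(x_0,\ldots,x_N)\le \sum_i (t_{i+1}-t_i)^{-1}c(x_i,x_{i+1},v_i,v_{i+1})$ with equality at $\velmap(x_0,\ldots,x_N)$, then integrate against admissible couplings and use the projection/lifting correspondence. The only cosmetic difference is that you first establish equality of the two infima and then read off \ref{item(i)phase} and \ref{item(ii)phase}, whereas the paper compares each candidate optimizer directly against an arbitrary competitor; the underlying chain of inequalities is identical.
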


\subsection{Monge solutions} Here, we discuss Monge, or graphical, solutions to the extended formulation. Unfortunately, we cannot provide a complete existence result. However, we show that if an optimal solution has some regularity properties, then it is of Monge form.

\begin{theorem}\label{thm:MongeSol}

Let $\hgamma$ be an optimal solution for \eqref{eq:multimargphasespace} such that for all $i=0,\ldots,N-1$ the measure $\hgamma_{i} \in \cP(\RD \times \RD)$ defined by
\[ \hgamma_i = (X_i,V_i)_{\#} \hat\gamma \]
is absolutely continuous w.r.t to the Lebesgue measure. Then there exist a map
\[  \Phi =(\varphi_1,\ldots,\varphi_N,\psi_1,\ldots,\psi_N): \RD \times \RD \rightarrow \R^{d \times N} \times \R^{d \times N}  \]
such that $\hgamma$ is concentrated on the graph of $\Phi$, i.e.
\[  \hgamma = (\mathbf{id}_{\R^{d}}, \varphi_1,\ldots,\varphi_N, \mathbf{id}_{\RD}, \psi_1,\ldots,\psi_N)_{\#} \gamma_0,  \]
or equivalently
\begin{equation}\label{eq:MongeSol1}
\hgamma\left( \bigcap_{i=1}^N \left\{ X_i = \varphi_i(X_0,V_0) \, ,V_i=\psi_i(X_0,V_0) \right\} \right) =1.
\end{equation}

\end{theorem}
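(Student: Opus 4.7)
My plan is to deduce \eqref{eq:MongeSol1} from Kantorovich duality for the multimarginal problem \eqref{eq:multimargphasespace} together with a twist-type first-order analysis. Since the cost $c$ in \eqref{eq:explicitcost_formula} is smooth and coercive, multimarginal duality provides potentials $u_0,\ldots,u_N:\RD\times\RD\to\R\cup\{-\infty\}$ satisfying
\[
\sum_{i=0}^N u_i(x_i,v_i) \;\leq\; \sum_{i=0}^{N-1}(t_{i+1}-t_i)^{-1}\, c(x_i,x_{i+1},v_i,v_{i+1})
\]
on $(\RD\times\RD)^{N+1}$, with equality $\hgamma$-almost surely. Replacing each $u_i$ by its $c$-conjugate in the remaining $N$ pairs preserves the duality gap, and since $c$ is a quadratic polynomial in each argument this renders each $u_i$ locally semi-concave. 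Combined with the absolute-continuity hypothesis on $\hgamma_i$ for $0\leq i\leq N-1$, this yields differentiability of $u_i$ at $\hgamma_i$-almost every point.

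The next step is a twist argument at $i=0$. Fix a support point $(x_0^*,v_0^*,\ldots,x_N^*,v_N^*)$ of $\hgamma$ at which the dual inequality is an equality and $u_0$ is differentiable. Perturbing only $(x_0,v_0)$ and expanding to first order yields
\[
\nabla_{(x_0,v_0)}u_0(x_0^*,v_0^*) \;=\; (t_1-t_0)^{-1}\,\nabla_{(x_0,v_0)} c(x_0^*,x_1^*,v_0^*,v_1^*).
\]
A direct differentiation of \eqref{eq:explicitcost_formula} gives $\nabla_{x_0}c = -24(x_1-x_0-v_0)+12(v_1-v_0)$ and $\nabla_{v_0}c=-12(x_1-x_0-v_0)+4(v_1-v_0)$, so the Jacobian of the map $(x_1,v_1)\mapsto \nabla_{(x_0,v_0)} c(x_0^*,x_1,v_0^*,v_1)$ is the constant block matrix whose $d\times d$ blocks are $-24\,I$, $12\,I$, $-12\,I$, $4\,I$, with determinant $48^d\neq 0$. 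This affine map is therefore an isomorphism, and $(x_1^*,v_1^*)$ is uniquely recovered as a measurable function $(\varphi_1,\psi_1)(x_0^*,v_0^*)$.

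The same computation powers an induction on $i$. For each interior index $1\leq i\leq N-1$, the first-order condition at $(x_i,v_i)$ reads
\[
\nabla_{(x_i,v_i)}u_i(x_i^*,v_i^*) = (t_i-t_{i-1})^{-1}\nabla_{(x_i,v_i)}c(x_{i-1}^*,x_i^*,v_{i-1}^*,v_i^*) + (t_{i+1}-t_i)^{-1}\nabla_{(x_i,v_i)}c(x_i^*,x_{i+1}^*,v_i^*,v_{i+1}^*).
\]
Only the second summand depends on $(x_{i+1},v_{i+1})$, and its dependence is again an affine isomorphism by the same block-matrix computation. Assuming inductively that $(x_j^*,v_j^*)=(\varphi_j,\psi_j)(x_0^*,v_0^*)$ for $j\leq i$, this relation expresses $(x_{i+1}^*,v_{i+1}^*)$ as a measurable function of $(x_0^*,v_0^*)$, giving \eqref{eq:MongeSol1} after one final step at $i=N-1$ which determines $(x_N^*,v_N^*)$.

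The principal technical obstacle is securing the semi-concavity and almost-everywhere differentiability of the potentials in a multimarginal setting where each $u_i$ depends on the paired variable $(x_i,v_i)$: the textbook two-marginal arguments do not apply verbatim. The key is to iterate $c$-transforms on each factor and to exploit the explicit quadratic structure of \eqref{eq:explicitcost_formula} in order to propagate semi-concavity estimates and to ensure attainment of the duality. Once this regularity is in hand, the invertibility of the affine Jacobian above—an algebraic feature of $c$ alone—closes the argument.
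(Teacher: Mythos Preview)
Your proof is essentially correct, but it follows a genuinely different route from the paper.

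The paper does not invoke multimarginal duality at all. Instead, it exploits the pairwise structure of the cost in \eqref{eq:multimargphasespace}: for each $i=0,\ldots,N-1$ it considers the \emph{two-marginal} problem with cost $c(x_i,x_{i+1},v_i,v_{i+1})$ between the given marginals $\hgamma_i$ and $\hgamma_{i+1}$. Since $c$ satisfies the twist condition (the same Jacobian computation you do) and $\hgamma_i$ is absolutely continuous, the standard Brenier--Gangbo--McCann theorem yields a unique optimal coupling $\tilde\pi_i$, necessarily of Monge form with some map $F_i$. Composing these maps gives an admissible Monge plan $\tilde\gamma$. A direct cost comparison then shows that the pairwise projections $\hat\pi_i=(X_i,V_i,X_{i+1},V_{i+1})_{\#}\hgamma$ must all coincide with the unique optima $\tilde\pi_i$, whence $\hgamma$ is supported on the graph of the composed maps.

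What each approach buys: the paper's argument is modular --- it black-boxes the two-marginal Monge theorem and the uniqueness it provides, and never needs to discuss multimarginal potentials or their regularity. Your approach is more self-contained but front-loads the work into establishing semi-concavity and a.e.\ differentiability of the multimarginal potentials, a point you correctly flag as the main technical obstacle; the paper sidesteps this entirely. Both routes rest on the same invertibility of the $2d\times2d$ block Jacobian of $c$, and both need the absolute continuity of each $\hgamma_i$ (you for Rademacher-type differentiability, the paper for the two-marginal Monge theorem). Your approach would in principle extend to multimarginal costs that do not decompose into consecutive pairwise terms, where the paper's reduction is unavailable.
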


It would be very desirable to derive the conclusion assuming just regularity of the $(\rho_i)$ instead of the $\hgamma_i$. Theorem \ref{thm:MongeSol} implicitly tells that Monge solutions for \eqref{eq:mulitmargprob} are not to be expected; the support of an optimal solution should be locally of dimension $2d$. We also believe that the assumptions of the Theorem can be largely relaxed. In the next propostion we take a first step in this direction for the case when  $N=2$ (i.e. we interpolate three measures), using the general results of \cite{pass2012structure}.

\begin{prop}\label{prop:threepoints}
Let $N=2$, $\hpi$ an optimal solution for \eqref{eq:mulitmargprob}, and $(x_0,x_1,x_2)$ a point in the support of $\hpi$. Then there is a neighborhood $O$ of $(x_0,x_1,x_2)$ such that the intersection of the support of $\hpi$ with $O$ is contained in a Lipschitz submanifold of dimension $2d$.
\end{prop}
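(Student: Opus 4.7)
The plan is to invoke the local structure theorem for optimal plans in multi-marginal transport proved by Pass \cite{pass2012structure}, which asserts that, under mild smoothness hypotheses on the cost, the support of any optimal plan is, near each of its points, contained in a Lipschitz submanifold whose dimension is controlled by the rank/signature of a symmetric matrix built from the cross second partial derivatives of the cost.

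First, I would verify that Pass's smoothness hypothesis is satisfied by $\mathcal{C}$. For $N=2$, the natural cubic spline $S_t(x_0,x_1,x_2)$ is obtained by solving a linear system (interpolation at $t_0,t_1,t_2$, vanishing second derivative at both endpoints, continuity of $S'$ and $S''$ at $t_1$) whose right-hand side depends linearly on $(x_0,x_1,x_2)$. Hence $S_t$ depends linearly on $(x_0,x_1,x_2)$ and
\[
\mathcal{C}(x_0,x_1,x_2) \;=\; \int_0^1 |\partial_{tt} S_t(x_0,x_1,x_2)|^2 \,\De t
\]
is a non-negative quadratic form on $(\RD)^3$; in particular it is real-analytic.

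Second, I would compute the mixed Hessians $D^2_{x_i x_j}\mathcal{C}$ ($i\neq j$) that enter Pass's criterion. A convenient representation comes from Lemma \ref{lm:costref}:
\[
\mathcal{C}(x_0,x_1,x_2) \;=\; \min_{v_0,v_1,v_2\in\RD}\Big\{\, t_1^{-1}\,c(x_0,x_1,v_0,v_1) \,+\, (1-t_1)^{-1}\,c(x_1,x_2,v_1,v_2)\,\Big\},
\]
whose inner objective is a quadratic form specified by \eqref{eq:explicitcost_formula}. Performing the unconstrained minimization in $(v_0,v_1,v_2)$ expresses $\mathcal{C}$ as a Schur complement of an explicit $6d \times 6d$ block matrix; the blocks $D^2_{x_i x_j}\mathcal{C}$ can then be read off directly and inserted into Pass's formula.

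The main obstacle is the linear-algebra step: executing the Schur-complement computation carefully enough to identify the rank/signature data required by Pass's theorem, and verifying that this data yields exactly the bound $2d$. Heuristically the value $2d$ is natural: $\mathcal{C}$ is built from a single curve in $\spsp^0$, and the family of natural splines on $[0,1]$ is finite-dimensional, so the $c$-monotone directions for $\mathcal{C}$ inside $(\RD)^3$ should carve out a codimension-$d$ subspace at generic points in the support of $\hpi$. Once this computation is in place, Pass's theorem directly produces a local Lipschitz submanifold of dimension $2d$ containing the support of $\hpi$ near $(x_0,x_1,x_2)$, as claimed.
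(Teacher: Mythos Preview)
Your overall strategy---invoke Pass's local structure theorem after determining the signature of the block matrix of cross Hessians $D^2_{x_i x_j}\mathcal{C}$---is precisely the paper's. But you stop short of the computation you yourself flag as ``the main obstacle,'' and the heuristic about $c$-monotone directions carving out a codimension-$d$ subspace is not a substitute for it; as written, the proposal is an outline with the decisive step missing.

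The paper closes this gap by a simplification you overlook: instead of extracting the cross Hessians via a $6d\times 6d$ Schur complement, compute $\mathcal{C}$ in closed form first. For $N=2$ with (without loss of generality) $t_1-t_0=t_2-t_1$, the natural spline has piecewise-linear second derivative vanishing at the endpoints, and one finds directly that, up to a positive multiplicative constant,
\[
\mathcal{C}(x_0,x_1,x_2)=|x_0-2x_1+x_2|^2.
\]
Hence $D^2_{x_0 x_1}\mathcal{C}=D^2_{x_1 x_2}\mathcal{C}=-4I_d$ and $D^2_{x_0 x_2}\mathcal{C}=2I_d$, so the matrix in Pass's criterion equals $A\otimes I_d$ with
\[
A=\begin{pmatrix}0&-4&2\\-4&0&-4\\2&-4&0\end{pmatrix},
\]
whose characteristic polynomial $\lambda^3-36\lambda-64=(\lambda+2)(\lambda^2-2\lambda-32)$ has exactly one positive root. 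Thus $q_+=d$ and Pass's theorem yields the local dimension bound $3d-q_+=2d$. (For unequal subintervals the same reasoning applies: $\mathcal{C}$ is still, up to a positive constant, $|a_0x_0+a_1x_1+a_2x_2|^2$ with $a_0,a_2>0>a_1$, and the resulting $3\times 3$ off-diagonal matrix again has trace zero, positive determinant, and negative sum of principal $2\times2$ minors, hence exactly one positive eigenvalue.) Your Schur-complement route would ultimately produce the same blocks, but the detour is unnecessary once $\mathcal{C}$ is recognized as a rank-one quadratic form.
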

Let us note that this proposition does not yield the existence of Monge solutions for \eqref{eq:multimargphasespace}; however it proves that optimal solutions of \eqref{eq:mulitmargprob} have a support which is locally of dimension $2d$, without making any further regularity assumption on the optimal coupling.
\section{Fluid dynamical formulation of \eqref{MSP}}\label{sec:fluid}
To better understand what follows, lets us recall the fluid dynamic formulation of the Monge-Kantorovich problem, which is due to Benamou and Brenier. In  \cite{benamou2000computational}  they showed that the optimal value for
\begin{subequations}
\begin{align*}
 &{}&\label{BB}\tag{BB} \inf_{\mu,v} \int_{0}^1 \int_{\RD}|v_t|^2(x) \mu_t(x) \De x  dt\\
&{}& \nonumber\partial_t \mu_t(x)+ \nabla \cdot (v_t \mu_t)(x)=0 \\
&{}& \nonumber \mu_0=\rho_0,\mu_1=\rho_1
\end{align*}
\end{subequations}
is the squared Wasserstein distance $W^2_2(\rho_0,\rho_1)$ and that the optimal curve is the displacement interpolation \cite{mccann1997convexity}.

  \subsection{A fluid dynamic formulation for \eqref{MSP}}

Inspired by   \eqref{eq:multimargphasespace}, we formulate the following problem

\begin{subequations}\label{FDF}
\begin{align}
&{}&\label{FDFa} \inf_{\mu,a} \int_{0}^1 \int_{\RD} |a_t(x,v)|^2  \mu_t(x,v) \De x \De v  \\
&{}&\label{FDFb}   \partial_{t} \mu_t(x,v)+ 
\langle \nabla_x \mu_t(x,v), v \rangle 
+ \nabla_v \cdot (a_t\mu_t)(x,v)=0, \\
&{}&\label{FDFc} \int_{\RD} \mu_{t_i}(x,v) dv = \rho_{t_i}, \quad i=0,\ldots,N,
\end{align}
\end{subequations}
where we denote by $\nabla_x$(resp. $\nabla_v$) the gradient taken w.r.t. the $x$ (resp. $v$) variables, so that $\nabla_x \cdot$ stands for the divergence taken w.r.t. the $x$ variables, and similarly for $\nabla_v \cdot$.
\begin{claim}\label{claim:equivalence}
The two problems \eqref{MSP} and \eqref{FDF} are equivalent.
\end{claim}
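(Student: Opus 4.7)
The plan is to bridge the Lagrangian formulation \eqref{MSP}, via its phase-space reformulation \eqref{pathphasesp}, to the Eulerian formulation \eqref{FDF}, in the spirit of the Benamou-Brenier argument. Since \eqref{MSP} and \eqref{pathphasesp} are already observed to be equivalent, it is enough to exhibit a value-preserving correspondence between admissible pairs $(\mu, a)$ of \eqref{FDF} and admissible plans $Q$ of \eqref{pathphasesp}. I would organize the argument symmetrically, establishing inequalities of the optimal values in both directions.

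\emph{From paths to fluid.} Given $Q$ admissible for \eqref{pathphasesp}, define the phase-space marginals $\mu_t := (X_t, V_t)_{\#} Q$ and the conditional acceleration field
\[
a_t(x,v) := \bbE_Q\bigl[\partial_t V_t \,\big|\, X_t = x,\, V_t = v\bigr],
\]
which is well defined $dt \otimes \mu_t$-a.e.\ by square-integrability of $\partial_t V_t$ under $Q$. Testing $t \mapsto \int \varphi \, d\mu_t$ against a smooth compactly supported $\varphi(x,v)$, using $\partial_t X_t = V_t$ $Q$-a.s.\ together with the tower property, yields the weak form of \eqref{FDFb}. The constraints \eqref{FDFc} follow from $(X_{t_i})_{\#} Q = \rho_i$ by integrating out $v$. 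Jensen's inequality for conditional expectations gives
\[
\int_0^1 \!\!\int |a_t(x,v)|^2 \, \mu_t(dx\, dv)\, dt \;\leq\; \int_0^1 \!\!\int_{\Omega \times \Omega} |\partial_t V_t|^2 \, dQ\, dt,
\]
which bounds the Eulerian cost by the Lagrangian one.

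\emph{From fluid to paths.} Given $(\mu, a)$ admissible for \eqref{FDF}, I would apply the Ambrosio-Gigli-Savar\'e superposition principle to the time-dependent vector field $b_t(x,v) := (v, a_t(x,v))$ on $\RD \times \RD$. The $L^1(d\mu_t\, dt)$ integrability of $b_t$ follows from the finite $L^2$ cost combined with propagation of finite second moments in the phase-space variable (inherited from the endpoint constraints). This yields a measure $Q$ on $C([0,1]; \RD \times \RD)$ concentrated on absolutely continuous integral curves of $b$, with $(X_t, V_t)_{\#} Q = \mu_t$. Along $Q$-a.e.\ path one has $\partial_t X_t = V_t$ and $\partial_t V_t = a_t(X_t, V_t)$; combined with Fubini and finiteness of the Eulerian cost, this ensures $V \in H^1$ so that $Q(H^1 \times H^1) = 1$. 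The marginal conditions \eqref{MSPc} for $Q$ follow from \eqref{FDFc}, and the two costs coincide by construction:
\[
\int_0^1 \!\!\int |\partial_t V_t|^2 \, dQ\, dt \;=\; \int_0^1 \!\!\int |a_t(X_t, V_t)|^2 \, dQ\, dt \;=\; \int_0^1 \!\!\int |a_t(x,v)|^2 \, d\mu_t\, dt.
\]

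\emph{Main obstacle.} The delicate step is the superposition principle applied to the phase-space field $(v, a_t)$, since $a$ is only an $L^2(d\mu_t\, dt)$ object with no pointwise regularity. Ambrosio's extension of the superposition theorem beyond Lipschitz vector fields handles this, provided one verifies the required $L^1$ integrability of $b_t$ against $\mu_t$; the linear growth of the first component in $v$ makes this nontrivial and must be controlled via second-moment bounds obtained from the interpolation constraints and finiteness of the total cost. A secondary subtlety is the definability of the conditional expectation $a_t$ globally in $t$ from a Lagrangian plan $Q$, which follows from standard disintegration and measurable-selection arguments once $\partial_t V_t$ is shown to be in $L^2(dt \otimes dQ)$.
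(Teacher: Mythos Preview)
Your proposal is correct in outline and takes a genuinely different route from the paper. The paper's own argument is explicitly labeled a \emph{sketch} and is not rigorous: for the fluid-to-path direction it assumes existence and smoothness of classical flow maps $(\bbX_t,\bbV_t)$ for the field $(v,a_t)$ and pushes forward $\mu_0$ along them; for the path-to-fluid direction it \emph{assumes} a Monge solution of \eqref{eq:multimargphasespace} (which Theorem~\ref{thm:MongeSol} only provides under extra absolute-continuity hypotheses on the $\hgamma_i$), then builds $a_t$ by inverting the flow. Both steps rely on regularity that is not established.

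Your approach replaces these assumptions by the two standard tools: conditional expectation plus Jensen for the Lagrangian-to-Eulerian direction, and the superposition principle for the Eulerian-to-Lagrangian direction. This is more robust and does not require Monge structure or classical flows. The trade-off is that you must verify the hypotheses of the superposition theorem for the phase-space field $b_t(x,v)=(v,a_t(x,v))$, and you correctly flag this as the crux. One point to sharpen: the marginal constraints \eqref{FDFc} control only the $x$-moments of $\mu_{t_i}$, not the $v$-moments, so ``inherited from the endpoint constraints'' is too quick. The $v$-moment control has to come from finiteness of the cost together with the constraint $\partial_t X_t=V_t$: from $\int_0^1\!\!\int |a_t|^2\,d\mu_t\,dt<\infty$ and $\int |x|^2 d\mu_{t_i}<\infty$ one can bound $\int_0^1\!\!\int |v|^2\,d\mu_t\,dt$ via a Gronwall-type argument on $t\mapsto \int(|x|^2+|v|^2)\,d\mu_t$, differentiating under the continuity equation. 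Once that is in place, the $L^1$ integrability of $b_t$ needed for Ambrosio's superposition follows. With this refinement your argument upgrades the paper's heuristic to an actual proof.
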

We provide formal calculations to justify the claim. However, the argument below does not constitute a rigorous proof as it rests on assuming existence of Monge-like solutions for \eqref{eq:multimargphasespace}, which we only proved under certain assumptions. Moreover, we take derivatives formally and do not insist here on justifying their existence , and in which sense we should consider them.
\begin{proof}[Sketch of the argument]
Let $(\mu,a)$ be an optimal solution for \eqref{FDF}. The constraint \eqref{FDFb} implies that the vector field $(w_t)$ solves the continuity equation for $(\mu_t)$, where
\be\label{eq:wt} w_t = \begin{pmatrix} v \\ a_t(x,v) \end{pmatrix} \ee
Thus, if we consider the flow maps $(\bbX_t, \bbV_t)_{t \in [0,1]}$ for $w_t$, defined by
\begin{equation}\label{eq:phasespaceflowmaps}
\partial_t  \begin{pmatrix}
 \mathbb{X}_t\\ 
\mathbb{V}_t
\end{pmatrix}
=
\begin{pmatrix}
\mathbb{V}_t\\
 a_t(\mathbb{X}_t \mathbb{V}_t)
\end{pmatrix}, \quad  \begin{pmatrix}
 \mathbb{X}_0 \\ 
\mathbb{V}_0 
\end{pmatrix} = \ \mathbf{id}_{\RD \times \RD}
\end{equation}
then we have that 
\begin{equation}\label{eq:phasespacepushforward}
 \forall t \in [0,1], \quad (\bbX_t,\bbV_t )_{\#} \mu_0 = \mu_t .
\end{equation}
In particular, because of \eqref{FDFc}
\be\label{eq:admissibility} \forall i=0,\ldots,N \quad (\mathbb{X}_{t_i})_{\#}\mu_0= \rho_i.
\ee
Define $P \in \cP(\Omega)$ as follows
\be\label{eq:phasespaceMongesol}  P:=  ((\mathbb{X}_t)_{t \in [0,1]})_{\#} \mu_0.
\ee
Equation \eqref{eq:admissibility} makes sure that $P$ is admissible for \eqref{MSP} and we have
\beas
\int_0^1 \int_{\Omega} |\partial_{tt} X_t|^2  \De P\De t &\stackrel{\eqref{eq:phasespaceMongesol}}{=}&   \int_0^1 \int_{\RD \times \RD}|\partial_{tt} \mathbb{X}_t(x,v)|^2   \mu_{0}(x,v) \De x \De v  \De t\\
& \nonumber \stackrel{\eqref{eq:phasespaceflowmaps}}{=}&  \int_0^1 \int_{\RD \times \RD}|\partial_{t} \mathbb{V}_t(x,v)|^2   \mu_{0}(x,v) \De x \De v  \De t \\
&  \nonumber \stackrel{\eqref{eq:phasespaceflowmaps}}{=}&  \int_0^1 \int_{\RD \times \RD}|a_t (\mathbb{X}_t,\mathbb{V}_t)(x,v)|^2   \mu_{0}(x,v) \De x \De v  \De t\\
&  \nonumber \stackrel{\eqref{eq:phasespacepushforward}}{=}& \int_{0}^1 \int_{\RD \times \RD} |a_t (x,v)|^2  \mu_t(x,v) \De x \De v \De t. 
\eeas
Thus, given an optimal solution $(\mu,a)$ for \eqref{FDF}, we have constructed a feasible solution $P$ for \eqref{MSP} such that the cost function \eqref{FDFa} evaluated at $(\mu,a)$
equals the cost function \eqref{MSPa} evaluated at $P$. For the converse, we make the observation that  Theorem \ref{thm:MongeSol} grants the existence of an optimal Monge solution for the problem \eqref{eq:multimargphasespace}. We can lift this solution to an optimal Monge solution for \eqref{pathphasesp} using point \ref{item(ii)phasestr} of Theorem \eqref{thm:phasesplinestruc}. Therefore, there exist a measure  $\mu_0 \in \cP(\RD\times \RD)$ and two family of maps $(\bbX_t)_{t\in [0,1]}, (\bbV_t)_{t\in [0,1]}$ defined on $\RD \times \RD$ and taking values in $\RD$ such that
\begin{enumerate}[label= (\roman*),ref= (\roman*)]
\item $\bbX_0=\mathbf{id}_{\RD}, \bbV_0 = \mathbf{id}_{\RD}$ 
\item\label{item(ii)FDF}  $\partial_t \bbX_t = \bbV_t$ for all $t\in[0,1]$.
\item If we define $Q$ via
\[ Q= ((\bbX_t , \bbV_t)_{t \in [0,1]} )_{\#} \mu_0 , \]
then $Q$ is optimal for \eqref{pathphasesp}. In particular, this implies that if we define the plan $P$ via \eqref{eq:phasespaceMongesol}, then $P$ is optimal for \eqref{MSP}.
\end{enumerate}
Define now  $(\mu_t)$ as the marginal flow of $Q$, i.e.
\be\label{eq:mutdef} \forall t \in [0,1], \quad  \mu_t := (
\bbX_t , \bbV_t)_{\#} \mu_0. \ee
It is clear from the definition that $(\mu_t)$ satisfies \eqref{FDFc}. Moreover, provided the maps $\bbV_t$ are invertible, by setting 
\[ a_t(x,v) := (\partial_t \bbV_t) \circ (\bbV_t)^{-1} (x,v) \]
we obtain that \eqref{eq:phasespaceflowmaps} is satisfied. This implies that $w_t$, defined as in \eqref{eq:wt} satisfies the continuity equation for $(\mu_t)$, and hence that \eqref{FDFb} holds.
Hence $(\mu_t)$ is admissible for \eqref{FDF}. With a similar argument as above, we get
\beas
\int_{0}^1 \int_{\RD \times \RD} |a_t (x,v)|^2  \mu_t(x,v) \De x \De v \De t. 
 &\stackrel{\eqref{eq:mutdef}}{=}&   \int_0^1 \int_{\RD \times \RD}|a_t (\mathbb{X}_t,\mathbb{V}_t)(x,v)|^2   \mu_{0}(x,v) \De x \De v  \De t \\
& \nonumber \stackrel{\eqref{eq:phasespaceflowmaps}}{=}&  \int_0^1 \int_{\RD \times \RD}|\partial_{t} \mathbb{V}_t(x,v)|^2   \mu_{0}(x,v) \De x \De v  \De t \\
&  \nonumber \stackrel{\eqref{eq:phasespaceflowmaps}}{=}& \int_0^1 \int_{\RD \times \RD}|\partial_{tt} \mathbb{X}_t(x,v)|^2   \mu_{0}(x,v) \De x \De v  \De t\\
&  \nonumber \stackrel{\eqref{eq:phasespacepushforward}}{=}& \int_0^1 \int_{\Omega} |\partial_{tt} X_t|^2  \De P\De t
\eeas
Thus, starting for a particular optimal solution $P$ for \eqref{MSP} (precisely the one associated to the Monge solution of \eqref{pathphasesp}), we have constructed an admissible solution $(\mu,a)$ for \eqref{FDF} such that the cost function \eqref{MSPa} evaluated at $P$
equals the cost function \eqref{FDFa} evaluated at $(\mu,a)$.
\end{proof}

\section{A Riemannian geometry approach}\label{sec:riemannian}

There exist different approaches to the problem of interpolating smoothly data on a Riemannian manifold; in the upcoming discussion we shall follow the \emph{intrinsic approach}, see \cite{noakes1989cubic},\cite{camarinha2001geometry} and \cite{tahraoui2016riemannian} for infinite-dimensional manifolds.
Consider data $(t_i,x_i)_{i=0,\ldots,N}\subseteq [0,1] \times M$, where $M$ is a Riemannian manifold whose Levi-Civita connection is $\nabla$. Then, Holladay's theorem suggests to define the interpolating spline as the optimizer for
\bea\label{eq:Riemanniansplinegio}
\nonumber &{}&\inf_{X} \int_0^1 \big\langle \nabla_{\dot{X}_t} \dot{X}_t, \nabla_{\dot{X}_t} \dot{X}_t \big\rangle\,  dt\\
&{}&X \in  H^2 ([0,1];M), \ X_{t_i} = x_i  \quad  i =0, \ldots,N.
\eea

In a seminal paper \cite{otto2001geometry}, Otto discovered that the metric space $(\cP_2(\RD), W_2(\cdot,\cdot))$ can be looked at almost as an infinite dimensional Riemannian manifold. In the next subsection we shall present a formal construction of the Riemannian metric for $(\cP_2(\RD)$ (often called the Otto metric).  But our claims will not be rigorously detailed and our treatment of the subject will only be  partial; to gain a deeper insight we refer the reader to Otto's paper and, in addition, to \cite{giglisecond},\cite{ambrosio2013user},\cite{lott2008some},\cite{villani2008optimal}.

\subsection{The Riemannian metric of optimal transport}
Aim of this subsection is to define formally a kind of Riemannian metric on $\cP_2(\R^d)$ for which displacement interpolations \cite{mccann1997convexity} are constant speed geodesics. The construction begins by identifying the tangent space at $\rho$ with the space of square integrable gradient vector fields. The identification is possible thanks to Brenier's theorem \cite{brenier1991polar}. This space is
\[ \tsp_{\rho} := \overline{\left\{ \nabla \varphi; \varphi \in C^{\infty}_c  \right\}}^{L^2(\rho)}. \]
The second step is to define the first derivative (velocity field) $v_t \in \tsp_{\rho_t}$  of a curve $(\rho_t)$ through the continuity equation 
\[\partial_t \rho_t + \nabla \cdot ( v_t \rho_t)=0, \quad v_t \in \tsp_{\rho_t}. \]
 Then, one defines the Riemannian metric by means of the $L^2$ product 
\be\label{eq:riemanmetr} \langle \nabla \varphi, \nabla \psi \rangle_{\mathbf{T}_{\rho}} := \int_{\R^d} \langle \nabla \varphi(x), \nabla \psi (x)\rangle \, \rho(x) \De x, \ee
where $\langle ., . \rangle$ stands for the standard inner product on $\R^d$. The Benamou-Brenier formula \eqref{BB} establishes that the displacement interpolation is a constant speed geodesic for this Riemannian structure, as it minimizes the energy functional among all curves with a given start and end. If we denote $\covdev$ the Levi Civita connection associated with the Riemannian metric, it turns out that, if $(\rho_t)$ is a smooth curve and $(v_t)$ its velocity field, then the covariant derivative of $(v_t)$ along $(\rho_t)$ is given by the formula (see e.g. \cite[Example 6.7]{ambrosio2013user})
\begin{equation*}
\covdev_{v_t} v_t = \partial_t v_t + \frac{1}{2}\nabla |v_t|^2  \in \tsp_{\rho_t}.
 \end{equation*} 
 Thus, we have
 \be\label{eq:covdevnorm} \langle\covdev_{v_t} v_t, \covdev_{v_t} v_t \rangle_{\tsp_{\rho_t}} = \int_{\RD} \Big|  \partial_t v_t + \frac{1}{2}\nabla |v_t|^2    \Big|^2 \rho_t ( x ) \De x . \ee
 \subsection{An alternative definition for measure-valued splines}
In view of \eqref{eq:Riemanniansplinegio} and \eqref{eq:covdevnorm} it would be natural to define measure-valued splines by looking at
\begin{subequations}\label{eq:RiemannianOTspline}
\begin{align}
&{}& \inf_{\rho,v} \int_0^1 \int_{\RD} \Big|\partial_t v_t + \frac{1}{2}\nabla |v_t|^2\Big|^2(x)  \rho_t(x) \De x dt\\
&{}&\ \partial_t \rho_t(x) + \nabla \cdot (v_t \rho_t)(x)=0 , \quad v_t \in \tsp_{\rho_t} \\
&{}&(\rho_t) \in H^2([0,1] ; \cP_2(\RD)), \ \rho_{t_i}= \rho_i, \quad i=0, \ldots, N,
\end{align}
\end{subequations}
where the space $H^2([0,1] ; \cP_2(\RD))$ should be properly defined using the notions of absolutely continuous and regular curve (\cite[Ch. 6]{ambrosio2013user}). Clearly the problem \eqref{eq:RiemannianOTspline} looks rather different from \eqref{FDF}, and therefore, it should not be equivalent to \eqref{MSP}. However, it seems that, although different, the two problems are strongly related: in the next subsection, we shall provide a heuristic showing that \eqref{MSP} can be viewed as a relaxation of \eqref{eq:RiemannianOTspline}.
\subsection{The problem \eqref{eq:RiemannianOTspline} and the Monge formulation of \eqref{MSP}} 
We have seen that Monge solutions exist for the relaxation \eqref{eq:multimargphasespace}. Using point \ref{item(ii)phasestr} of Theorem \ref{thm:phasesplinestruc}, those Monge solutions can be lifted to path space to obtain Monge solutions for \eqref{pathphasesp}.  However, a Monge solution for \eqref{MSP} has a different structure. A Monge solution for \eqref{MSP} is a plan $P$ for which there exist a family of maps $\bbX_t:\RD \rightarrow \RD$ such that 
\be\label{eq:Mongeplanform} P:=  ((\bbX_t)_{t \in [0,1]})_{\#} \rho_0 .\ee
It is important to note the difference between \eqref{eq:Mongeplanform} and \eqref{eq:phasespaceMongesol}. Here, the maps $\bbX_t$ are defined on $\RD$ and $\rho_0\in \cP(\RD)$; there the maps $\bbX_t$ are defined on $\RD \times \RD$ and $\mu_0 \in\cP(\RD \times \RD)$. \\
Let us now present a heuristic connecting \eqref{eq:RiemannianOTspline} with the Monge formulation of \eqref{MSP}. Consider a solution $(\rho,v)$ for \eqref{eq:RiemannianOTspline}, and defines the maps $\mathbb{X}_t$ via 
\be\label{eq:flowmaps} \partial_t \mathbb{X}_t (x)= v_t(\mathbb{X}_t(x)), \quad \mathbb{X}_0(x)=x. \ee
and $P$ through \eqref{eq:Mongeplanform}.
These are the flow maps for the velocity field $(v_t)$ on $\RD$ and satisfy
\be\label{eq:pushforward}    (\mathbb{X}_t)_{\#}\rho_0 = \rho_t  \ee
 Therefore, $P$ is admissible for \eqref{MSP} and we have
\beas
\int_0^1\int_{H^2} | \partial_{tt}X_t |^2 \De P  \De t &\stackrel{\eqref{eq:Mongeplanform}}{=}& \int_0^1 \int_{\RD} |\partial_{tt} \mathbb{X}_t(x) |^2 \rho_0( x) \De x \De t \\
&\stackrel{\eqref{eq:flowmaps}}{=}&
  \int_0^1 \int_{\RD} \Big|\partial_t v_t (\bbX_t(x))\Big|^2  \rho_0(x) \De x \De t\\
&\stackrel{\eqref{eq:flowmaps}}{=}&  \int_0^1 \int_{\RD} \Big|\partial_t v_t + D_{v_t} v_t \Big|^2(\mathbb{X}_t(x))  \rho_0(x) \De x \De t\\
&=& \int_0^1 \int_{\RD} \Big|\partial_t v_t + \frac{1}{2}\nabla |v_t|^2\Big|^2(\bbX_t(x))  \rho_0 \De x \De t\\
&\stackrel{\eqref{eq:pushforward}}{=}& \int_0^1 \int_{\RD} \Big|\partial_t v_t + \frac{1}{2}\nabla |v_t|^2\Big|^2(x)  \rho_t(x) \De x \De t,
\eeas
where we denoted by $D_{v_t} v_t$ the Jacobian of the vector field $v_t$ applied to $v_t$. Since $v_t$ is of gradient type, we have indeed $D_{v_t} v_t = \frac{1}{2}\nabla |v_t|^2 $.
Thus, we have seen that, to a solution of \eqref{eq:RiemannianOTspline} we can associate a Monge solution for \eqref{MSP} and the cost of the two solutions for their respective problems is identical. To conclude that the two problems are equivalent, we should reverse this last statement. But to do this, we should know that we can w.l.o.g consider Monge solutions \eqref{eq:Mongeplanform} where the maps $\bbX_t$ are the flow maps for a \emph{gradient} vector field. We do not know, at the moment, whether this is true or not. If we remove the constraint $v_t \in \tsp_{\rho_t}$ from \eqref{eq:RiemannianOTspline}, then it is natural to conjecture that \eqref{eq:RiemannianOTspline} and the Monge formulation of \eqref{MSP} are equivalent.

 \section{Proofs}\label{sec:proofs}
\begin{proof}[Proof of theorem \ref{thm:splinestruc}]
We first prove that \ref{item(ii)}$\Rightarrow$\ref{item(i)}. Let $\hat{P}$ be as in \ref{item(ii)},  $P$ any other admissible probability measure for \eqref{MSP} and $\pi := (X_{\mathcal{T}})_{\#} P$. Observe that, since $P$ is supported on $H^2$ we have that 
$\omega$ is almost surely an admissible path for the problem \eqref{eq:spline} for the choices $x_i =X_{t_i}(\omega)$. Therefore 
\be\label{eq:splinestruc2} P -\text{a.s.} \quad \int_{0}^1 |\partial_{tt} X_{t}|^2 \De t \geq \int_{0}^1 |\partial_{tt} S_t(X_{\mathcal{T}}) |^2 \De t,  \ee
where we recall that $X_{\mathcal{T}} = (X_{t_0}, \ldots, X_{t_N})$ and $S(x_0,\ldots,x_N)$ is the natural interpolating spline.
Using this, we get
\bea\label{eq:splinestruc3} 
  \int_{0}^1 \int_{\Omega}|\partial_{tt}X_{t}|^2 \De P\De t  &\geq &  \int_{0}^1 \int_{\Omega}|\partial_{tt} S_t(X_{\mathcal{T}}) |^2 \De P\, \De t\\
&=& \nonumber \int_{\Omega} \mathcal{C}\left(X_{\mathcal{T}}\right) \De P  \\
&=& \nonumber \int \mathcal{C}(x_0,\ldots,x_N) \De \pi\\
&\geq & \int\mathcal{C}(x_0,\ldots,x_N) \De \hat{\pi}
 \eea
 where the last inequality comes from the optimality of $\hat{\pi}$. On the other hand, since $\hat{P}(\spsp^0)=1$, we have that 
 \[\hat{P}-\text{a.s.}, \quad X_{\cdot}= S_{\cdot}(X_{\mathcal{T}}) \]
Thus, 
\bea\label{eq:splinestruc1}
 \int_{0}^1 \int_{\Omega}|\partial_{tt}X_{t}|^2 \De \hat{P}\De t  &= &  \int_{0}^1 \int_{\Omega}|\partial_{tt} S_t(X_{\mathcal{T}}) |^2 \De \hat{P}\, \De t\\
&=& \nonumber \int_{\Omega} \mathcal{C}\left(X_{\mathcal{T}}\right) \De \hP  \\
&=& \nonumber \int \mathcal{C}(x_0,\ldots,x_N) \De \hat{\pi},
\eea
which proves that $\hat{P}$ is an optimal solution for \eqref{MSP}.

Let us now prove \ref{item(i)}$\Rightarrow$ \ref{item(ii)}. Let $\hat{P}$ an optimal measure for \eqref{MSP}, and assume that $\hat{P}(\spsp^0) <1$.  Consider the Markov kernel $\mathcal{K}$ 
 \bes \mathcal{K}: \R^{d \times (N+1)} \times  \mathcal{B}(\Omega) \rightarrow [0,1], \quad  \mathcal{K}(x_0,\ldots,x_N,A) = \begin{cases} 1, \quad & \mbox{if $ S_{\cdot}(x_0,\ldots,x_N) \in A $ } \\ 0, \quad & \mbox{ otherwise } \end{cases}  \ees
 and define $P^*$ by composing $\hat{\pi}$ with  $\mathcal{K}$
 \be\label{eq:splinestruc5}  P^*(A) = \int \mathcal{K}(x_0,\ldots,x_N,A)  \De \hat{\pi}.  \ee
 By construction, we have that $P^*(\spsp^0)=1$ and that $(X_{\mathcal{T}})_{\#} P^*=\hat{\pi}$. Thus, arguing as in \eqref{eq:splinestruc1} we obtain that 
 \be\label{eq:splinestruc4} \int_{0}^1 \int_{\Omega}|\partial_{tt} X_{t}|^2 \De P^* \, \De t   = \int \mathcal{C}(x_0,\ldots,x_N) \De \hat{\pi}. \ee
 Moreover, we have that \eqref{eq:splinestruc2} holds under $\hat{P}$ since $\hat{P}$ is admissible for \eqref{MSP} and since  $\hP(\spsp^0)<1$ we also have the strict inequality in \eqref{eq:splinestruc2} holds with positive probability under $\hP$. Arguing as in \eqref{eq:splinestruc3}, we obtain, with  minimal changes,
\[ \int_{0}^1\int_{\Omega} |\partial_{tt} X_t |^2 \De \hP \De t > \int\mathcal{C}(x_0,\ldots,x_N) \De \hat{\pi} . \]
This last inequality, toghether with \eqref{eq:splinestruc4} contradicts the optimality of $\hat{P}$. Thus, it must be that $\hP\left(\spsp^0\right)=1$, which also implies that $P=P^*$. Arguing again as above, it is easy to see that 
\[  \int_{0}^1 \int_{\Omega}|\partial_{tt} X_{t}|^2 \De \hat{P}\De t   = \nonumber \int \mathcal{C}\left(x_0, \ldots, x_N \right) \De \hat{\pi}\]
Assume now that $\hat{\pi}$ is not an optimal measure for \eqref{eq:mulitmargprob}. Then there exists $\pi^*$ which is admissible and performs better than $\hat{\pi}$. We can again define $P^*$ as in \eqref{eq:splinestruc5} replacing $\hat{\pi}$  with $\pi^*$. Reasoning as in the previous cases we get
\beas      \int_{0}^1  \int_{\Omega}|\partial_{tt}X_t |^2 \De \hP \De t &\geq & \int \mathcal{C}(x_0,\ldots, x_N) \De \hat{\pi}\\
& >&  \int\mathcal{C}(x_0,\ldots,x_N) \De \pi^*=    \int_{0}^1  \int_{\Omega}|\partial_{tt} \omega_t |^2 \De P^* \De t       ,
\eeas
which contradicts the optimality of $\hP$. Thus, it must be that $\hat{\pi}$ is optimal for \eqref{eq:mulitmargprob}. The proof that \ref{item(i)}$\Rightarrow$\ref{item(ii)} is now concluded.

Finally, let us show that an optimal solution to \eqref{MSP} exists. Notice that the function $\cC$ is a quadratic form and therefore an optimal solution $\hat{\pi}$ to \eqref{eq:mulitmargprob} always exists. The proof of this is straightforward adaptation of the proof of \cite[Th 1.5]{ambrosio2013user}. If we construct $\hat{P}$ as in \eqref{eq:splinestruc5}, then the implicaton  \ref{item(ii)}$\Rightarrow$\ref{item(i)} yields the conclusion.
\end{proof}
\begin{proof}[Proof of Lemma \ref{lm:costref}]
Consider the problem obtained by looking only at the time interval $[t_i,t_{i+1}]$, i.e.
\begin{subequations}\label{eq:spline+vel+restr}
\begin{align}
&{}&\inf_{X,V} \int_{t_i}^{t_{i+1}} |\dot{V}_t|^2 dt\\
&{}&(X,V) \in H^1([t_i,t_{i+1}];\RD) \times  H^1([t_i,t_{i+1}];\RD )  \\
&& \dot{X}_t=V_t, \quad \forall t \in [t_i,t_{i+1}]\\
&& X_{t_j} = x_j \quad j=i,i+1, \\
&& V_{t_j} = v_j, \quad j=i,i+1.
\end{align}
\end{subequations}
Using a standard argument based on integration by parts it is seen that the optimal solution is the only admissible $(\hat{X}^i,\hat{V}^i)$ such that $\hat{X}^{i} \in \Pi_{3}([t_i,t_{i+1}])$. A standard calculation then also proves that the optimal value for \eqref{eq:spline+vel+restr} is $(t_{i+1}-t_{i})^{-1} c(x_{i},x_{i+1},v_i,v_{i+1})$. More details can be found in Section \ref{sec:gaussian}. Next, we define  
\[ \forall t \in [0,1] \quad \hat{X}_t: = \sum_{i=0}^{N-1} \hat{X}^i_t  \mathbf{1}_{[t_{i}, t_{i+1})}(t) +x_{t_N}\mathbf{1}_{t=1} \]
\end{proof}
By construction, $\hat{X}$ of class $C^1$ on $[0,1]$ and on each interval $[t_{i},t_{i+1}]$ the second derivative exists and is bounded. This implies that $\hat{X}$ is in $H^2([0,1];\RD)$, and that $(\hat{X},\hat{V})$ is admissible for \eqref{eq:spline+vel}, where we set $\hat{V}:= \dot{\hat{X}}$ . The optimality follows observing that for any other admissible solution $(X,V)$ we have, using the optimality of $(\hat{X}^i,\hat{V}^i)$
\[  \int_{0}^1  |\dot{V}_t|^2 \De t =  \sum_{i=0}^{N-1} \int_{t_i}^{t_{i+1}}  |\dot{V}_t|^2 \De t \geq \sum_{i=0}^{N-1} \int_{t_i}^{t_{i+1}}  |\dot{\hat{V}}^i_t|^2 \De t =  \int_{0}^1  |\dot{\hat{V}}_t|^2 \De t . \]
This shows that $(\hat{X},\hat{V})$ is optimal, from which \eqref{eq:explicitcost} follows. \eqref{eq:costref1} follows from \eqref{eq:explicitcost} and Holladay's Theorem.
\begin{proof}[Proof of Theorem \ref{thm:phasesplinestruc}]
In view of Lemma \ref{lm:costref}, the proof of this theorem is a straightforward adaptation of that of Theorem \ref{thm:splinestruc}.
\end{proof}
\begin{proof}[Proof of Propostion \ref{lem:pahsespaceform}]
Let us make the preliminary observations that, because of \eqref{eq:costref1}, if $\pi$ is admissible for \eqref{eq:mulitmargprob}, and we define 
\be\label{eq:pitogamma} \gamma:= (X_0,\ldots,X_N,\mathbb{V}(X_0,\ldots,X_N))_{\#} \pi,\ee then we have
\be\label{eq:pitogamma2} \int \mathcal{C}(x_0,\ldots, x_N) \De \pi = \sum_{i=0}^{N-1} (t_{i+1}-t_{i})^{-1} \int c(x_{i},x_{i+1},v_{i},v_{i+1} ) \De \gamma. \ee
On the other hand, if $\gamma$ is any admissible plan for \eqref{eq:multimargphasespace} and we define 
\be\label{eq:gammatopi}\pi:=(X_0,\ldots, X_N)_{\#}\gamma, \ee 
then 
\be\label{eq:gammatopi2} \int \mathcal{C}(x_0,\ldots,x_N) \De \pi \leq \sum_{i=0}^{N-1} (t_{i+1}-t_{i})^{-1} \int c(x_{i},x_{i+1},v_{i},v_{i+1} ) \De \gamma.  \ee
We begin by proving \ref{item(i)phase}. Let $\hat{\gamma}$ be optimal for \eqref{eq:multimargphasespace} and consider an admissible coupling $\pi$ for \eqref{eq:mulitmargprob}. Define $\gamma$ through \eqref{eq:pitogamma}. Then we have, by optimality of $\hat{\gamma}$
\beas \int \mathcal{C}(x_0,\ldots,x_N) \De \pi &\stackrel{\eqref{eq:pitogamma2}}{=}& \sum_{i=0}^{N-1} (t_{i+1}-t_{i})^{-1} \int c(x_{i},x_{i+1},v_{i},v_{i+1} ) \De \gamma \\
& \geq &   \sum_{i=0}^{N-1} (t_{i+1}-t_{i})^{-1} \int c(x_{i},x_{i+1},v_{i},v_{i+1} ) \De \hat{ \gamma } 
\\ &\stackrel{\eqref{eq:gammatopi2}}{\geq}& \int \mathcal{C}(x_0,\ldots,x_N) \De \hat{\pi},
 \eeas
 and therefore $\hat{\pi}$ is optimal for \eqref{eq:multimargcost}. To prove \ref{item(ii)}, assume that $\hat{\pi}$ is optimal  for \eqref{eq:mulitmargprob} and let $\gamma$ be admissible for \eqref{eq:multimargphasespace}. Define $\pi$ via \eqref{eq:gammatopi}. Then we have, by optimality of $\hat{\pi}$
 \beas  \sum_{i=0}^{N-1} (t_{i+1}-t_{i})^{-1} \int c(x_{i},x_{i+1},v_{i},v_{i+1} ) \De \gamma &\stackrel{\eqref{eq:gammatopi2}}{\geq}&  \int \mathcal{C}(x_0,\ldots,x_N) \De \pi   \\
& \geq &  \int \mathcal{C}(x_0,\ldots,x_N) \De \hat{\pi} \\ &\stackrel{\eqref{eq:pitogamma2}}{=}& \sum_{i=0}^{N-1} (t_{i+1}-t_{i})^{-1} \int c(x_{i},x_{i+1},v_{i},v_{i+1} ) \De \hat{ \gamma },
 \eeas
 which yields the conclusion.
\end{proof}

\begin{proof}[Proof of Theorem \ref{thm:MongeSol}]
Let $\hat{\gamma}$ be optimal for \eqref{eq:multimargphasespace}. For each $i=0,..,N-1$ consider the reduced problem
\begin{eqnarray}\label{eq:MongeSol2}
\inf_{\pi} \int c(x_{i},x_{i+1},v_{i},v_{i+1}) \De \pi \\
\nonumber \pi \in \cP(\R^{d \times 2} \times \R^{d \times 2}), \quad (X_j,V_j)_{\#} \pi = \hgamma_i, \quad j =i,i+1,
\end{eqnarray}
where the projection maps $X_j,V_j, j=i,i+1$ are defined in the obvious way on $\R^{d \times 2} \times \R^{d\times 2}$. It is rather easy to see that the cost $c$ satisfies the ``twist condition" $(A1)$ from \cite{pass2015multi}. See also \cite{CheGeoPav17} for an alternative proof. Thus, since all the $\hgamma_i$ are absolutely continuous, we can use \cite[Th. 2.21]{pass2015multi} to conclude that there exists a unique solution $\tilde{\pi}_i$ to \eqref{eq:MongeSol2}, and that the solution is in Monge form. Thus there exists a map $F_{i}: \R^{d \times 2} \rightarrow \R^{d \times 2}$ such that
\begin{equation}\label{eq:MongeSol3}
\tilde{\pi}_i \Big( (X_{i+1},V_{i+1})= F_{i}( X_{i},V_{i}) \Big) =1. 
\end{equation}
For all $i=0,\ldots,N$, define the maps $(\varphi_i,\psi_i)$ via 
\begin{equation}\label{eq:MongeSol4}(\varphi_0,\psi_0)=(\mathbf{id}_{\RD},\mathbf{id}_{\RD}), \quad  (\varphi_i,\psi_i) := F_{i-1} \circ (\varphi_{i-1},\psi_{i-1}),
\end{equation}
Next, define $\tilde{\gamma}$ by
\[ \tilde{\gamma} = ( \mathbf{id}_{\RD}, \varphi_1,\ldots,\varphi_N,\mathbf{id}_{\RD},\psi_1,\ldots, \psi_N )_{\#} \hat{\gamma}_0 .\]
By construction $\tilde{\gamma}$ is admissible for \eqref{eq:multimargphasespace} and that $(X_i,V_i,X_{i+1},V_{i+1})_{\#} \tilde{\gamma} = \tilde{\pi}_i$ for all $i=0,\ldots,N-1$.
Since for any $i$,  $\hat{\pi}_i:=(X_{i},V_{i},X_{i+1},V_{i+1})_{\#} \hgamma$ is admissible  for \eqref{eq:MongeSol2} we have 
\[ \int c(x_{i},x_{i+1},v_{i},v_{i+1}) \De \hat{\pi}_i \geq  \int c(x_{i},x_{i+1},v_{i},v_{i+1}) \De \tilde{\pi}_i.\] 
Assume now that $\hat{\pi}_j \neq \tilde{\pi}_j$ for some $j$. Then, since  \eqref{eq:MongeSol2}  admits a unique optimal solution we have:
\[\int c(x_{j},x_{j+1},v_{j},v_{j+1}) \De  \hat{\pi}_j  >  \int c(x_{j},x_{j+1},v_{j},v_{j+1}) \De \tilde{\pi}_j \] 
But this would imply that
\begin{eqnarray*}
\sum_{i=0}^{N-1} (t_{i+1}-t_{i})^{-1} \int c(x_{i},x_{i+1},v_{i},v_{i+1} ) \De \hgamma &=& \sum_{i=0}^{N-1} (t_{i+1}-t_{i})^{-1} \int c(x_{i},x_{i+1},v_{i},v_{i+1} ) \De \hat{\pi}_i \\
&>& \sum_{i=0}^{N-1} (t_{i+1}-t_{i})^{-1} \int c(x_{i},x_{i+1},v_{i},v_{i+1} ) \De \tilde{\pi}_i \\
&=& \sum_{i=0}^{N-1} (t_{i+1}-t_{i})^{-1} \int c(x_{i},x_{i+1},v_{i},v_{i+1} ) \De \tilde{\gamma},
\end{eqnarray*}
which contradicts the optimality of $\hgamma$. Therefore, $\hat{\pi}_i=\tilde{\pi}_i$ for all $i$, which yields the conclusion, using \eqref{eq:MongeSol3} and \eqref{eq:MongeSol4} recursively.
\end{proof}
\begin{proof}[Proof of Propositon \ref{prop:threepoints}]
We assume w.l.o.g., $t_1-t_0=t_2-t_1$. It can be computed explicitly that, up to a  positive multiplying constant, $\cC(x_0,x_1,x_2) = |x_2- 2 x_1 +x_0|^2$. Fix now a point $\textbf{x}=(x_0,x_1,x_2)$ in the support of an optimal solution $\hat{\pi}$. Combining Th. 2.3 and Eq. (3) from \cite{pass2012structure} we obtain that if we denote $q_+$ the number of positive eigenvalues of the block-matrix
\[  \begin{pmatrix}  0 & D_{x_0,x_1} \cC(\textbf{x})  & D_{x_0,x_2} \cC(\textbf{x})   \\ D_{x_1,x_0} \cC(\textbf{x})  & 0 &  D_{x_1,x_2} \cC(\textbf{x})  \\  D_{x_2,x_0} \cC(\textbf{x})&  D_{x_2,x_1} \cC(\textbf{x})& 0 \end{pmatrix}   \]
where $ D_{x_i,x_j} \cC(\textbf{x}) $ is the $d \times d$ matrix given by $ (D_{x_i,x_j} \cC(\textbf{x}))_{kl}= (\partial_{x^k_i,x^l_j}  \cC)(\textbf{x})$, then the support of $\pi$ is locally  of dimension $3 d -q_+$ around $\textbf{x}$. Given the form of $\cC$ we have that
\[ (D_{x_0,x_1} \cC(\textbf{x}))_{kl} =  (D_{x_1,x_2} \cC(\textbf{x}))_{kl} = - 4 \delta_{kl}, \quad D_{x_0,x_2}\cC(\textbf{x}) = 2 \delta_{kl}\]
where $\delta_{kl}$ is the Kronecker delta. The conclusion then follows from a direct calculation.
\end{proof}

\section{The Gaussian case}\label{sec:gaussian}
%

We specialize and discuss the case where all the marginal distributions are Gaussian distributions on $\RD$, with the $i$th marginal $\rho_i$ having mean $m_i$ and covariance $\Sigma_i$ for $0\le i\le N$, denoted by $\rho_i= N(m_i,\Sigma_i)$. For simplicity, we take $t_i = i, ~0\le i \le N$. 
It turns out that the interplating one-time marginals are also Gaussian and, in fact, problem \eqref{MSP} easily decouples into interpolating separately means and covariances. Dealing with the means requires constructing a cubic spline that interpolates only the means $m_0,\ldots,m_N$ at the sample points. This cubic spline is denoted by $m(t), 0\le t\le N$. Interpolating the covariances requires solving a semidefinite program (SDP) as we explain next. 

We cast the problem in phase space as already done in Section \ref{sec:phase_space}.
 A cubic spline $X(\cdot)$, which solves \eqref{eq:spline}, also solves 
 	\begin{subequations}\label{eq:liftcurve}
	\begin{eqnarray}
	\inf_{X,V} && \int_{0}^{N} |\dot V_t|^2 dt \\
	&& \dot X_t = V_t, ~~X_{t_i} = x_i, ~0\le i \le N.
	\end{eqnarray}
	\end{subequations}
The optimality conditions can be written in the form
	\begin{subequations}
	\begin{eqnarray}
	\dot{X}_t &=& \phantom{-}V_t
	\\
	\dot{V}_t &=& \phantom{-}\Lambda_t\\
	\dot{\Lambda}_t&=&-M_t
	\end{eqnarray}
	\end{subequations}
where $\Lambda,M$ are Lagrange multipliers, and $M$ is piecewise constant in the specified intervals; clearly, $X\in C^2$. 

Earlier, we indicated that the cost \eqref{eq:explicitcost}, which involves all $(x_i,v_i)$'s, can be optimized over the $v_i$'s to derive 
$\mathcal{C}(x_0,\ldots,x_N)$ in \eqref{eq:costref1}, which is quadratic, say\footnote{We denote by $\,^T$ the ``transpose of''.},
\[
		\cC(x_0,x_1,\cdots,x_N) = x^T R x
\]
for a positive semidefinite $R$ and $x=(x_0^T,\ldots,x_N^T)^T$, considering the $x_i$'s as column vectors.  

Hence, our problem becomes
	\begin{equation}
	\inf \{\mathbb{E} \{X'RX\}~\mid~X=(X_{t_0}^T,\ldots,X_{t_N}^T)^T \mbox{ with }X_{t_i} \sim {\mathcal N}(m_i,\Sigma_i),~0\le i\le N\}
	\end{equation}
	over a choice of  correlation between the $X_{t_i}$'s so that each is normal with the specified mean and covariance, and the cost is minimized.
The minimum corresponds to iterpolating the means via a spline, as indicated earlier, and solving the SDP
	\begin{equation}\label{eq:directSDP}
	\inf_{\Sigma\ge 0} \{{\rm Tr}(R\Sigma)~\mid~\Sigma(i,i)= \Sigma_i,~0\le i\le N\}
	\end{equation}
to obtain required correlations between different points in time.
Thus, $X$ can be taken to be Gaussian. Here, $\Sigma(i,i)$ denotes successive $d\times d$-diagonal-block entries of the correlation matrix $\Sigma$ of $X$.

An alternative formulation, which is easier to encode and compute, is to consider minimizing directly
\eqref{eq:explicitcost} over a choice of joint covariance of all $(X_{t_i},V_{t_i})$'s, subject of course to the $X_{t_i}$'s being normal with the specified covariances.
Indeed, $c(x_i,x_{i+1},v_i,v_{i+1})$ in \eqref{eq:explicitcost_formula} takes the form
	\[
		(\xi_{i+1}-\Phi \xi_i)^T Q(\xi_{i+1}-\Phi \xi_i)
	\]
where $\xi_i=(x_i^T,v_i^T)^T$ and
	\[
		\Phi =
		\left[\begin{matrix}
		1 & 1\\ 0 & 1
		\end{matrix}\right]\otimes I_d,
		\quad 
		Q =
		\left[\begin{matrix}
		12 & -6 \\ -6 & 4
		\end{matrix}\right]\otimes I_d.
	\]
Now, denoting $\Xi_i=(X_i^T,V_i^T)^T$, 
	\[
		\mathbb{E}\{\Xi_i\Xi_i^T\} = \hat\Sigma_i
		\quad
		\mbox{and}
		\quad
		\mathbb{E}\{\Xi_i\Xi_j^T\} = S_{i,j}
	\]
for all $i,j$, the cost becomes
	\[
	\mathbb{E}\{ \sum_{i=0}^{N-1} c(x_i,x_{i+1},v_i,v_{i+1})\}
	=
	\sum_{i=0}^{N-1} {\rm Tr}(Q\hat\Sigma_{i+1}+\Phi^TQ\Phi\hat\Sigma_{i}-2Q\Phi S_{i,i+1}).
	\]
The covariance of the vector of $\Xi$'s will be denoted by
\begin{equation}
\hat{\Sigma} = \left[ \begin{matrix} \hat{\Sigma}_0  & S_{0,1} & \ldots & S_{0,N}\\
   S^T_{0,1} & \hat{\Sigma}_1 & \ldots & S_{1,N}\\
   \vdots & \vdots & \ddots & \vdots \\
   S_{0,N}^T & S_{1,N}^T & \cdots & \hat\Sigma_{N}
\end{matrix}\right],
\end{equation} 
and the optimization in \eqref{eq:multimargphasespace} now becomes
	\begin{subequations}\label{eq:noteffSDP}
	\begin{eqnarray}\label{eq:noteffSDP1}
		c_{\rm opt}= &&\inf\left\{\sum_{i=0}^{N-1} {\rm Tr}(Q\hat\Sigma_{i+1}+\Phi^TQ\Phi\hat\Sigma_{i}-2Q\Phi S_{i,i+1})\right.\mid\\
		&& \left.\hspace*{3cm} \hat\Sigma_{i}=\left[\begin{matrix}
		\Sigma_i & A_i \\ A_i^T & B_i
		\end{matrix}\right] \mbox{ and } \hat{\Sigma} \geq 0\right\}.\label{eq:noteffSDP2}		
	\end{eqnarray}
	\end{subequations}
Interestingly, the constraint can be simplified and the problem becomes
	\begin{subequations}\label{eq:effSDP}
	\begin{eqnarray}\label{eq:effSDP1}
		c_{\rm opt}= &&\inf\left\{\sum_{i=0}^{N-1} {\rm Tr}(Q\hat\Sigma_{i+1}+\Phi^TQ\Phi\hat\Sigma_{i}-2Q\Phi S_{i,i+1})\right.\\
		&& \left.\hspace*{3cm}\left[\begin{matrix}
		\hat \Sigma_i & S_{i,i+1} \\ S_{i,i+1}^T & \hat \Sigma_{i+1} 
		\end{matrix}\right] \ge 0,~~ \hat\Sigma_{i}=\left[\begin{matrix}
		\Sigma_i & A_i \\ A_i^T & B_i
		\end{matrix}\right]\right\}. \label{eq:effSDP2}
	\end{eqnarray}
	\end{subequations}
To see this, we first note that the cost is independent of $S_{i,j}$ for $|j-i|>1$. Moreover, \eqref{eq:noteffSDP2} implies \eqref{eq:effSDP2}. Therefore, to show the equivalence, we need only to prove that for any $\hat\Sigma_0,\ldots,\hat\Sigma_N,S_{0,1},\ldots,S_{N-1,N}$ satisfying \eqref{eq:effSDP2} there always exists $\hat\Sigma$ fulfilling \eqref{eq:noteffSDP2}. This can be done in a constructive manner. We construct a graphical model of $N+1$ random vectors $\Xi_0, \Xi_1,\ldots, \Xi_N$ such that $\Xi_{i+1}, \Xi_{i-1}$ are conditionally independent given $\Xi_i$ for each $i$, i.e., that the probability density of these vectors factors 
	\[
		p(\Xi_0, \Xi_1,\ldots, \Xi_N) = p(\Xi_0) p(\Xi_1\mid \Xi_0)\cdots p(\Xi_N\mid \Xi_{N-1}).
	\]
In addition, we let $p(\Xi_0)$ be a Gaussian density with zero mean and covariance $\hat \Sigma_0$, and $p(\Xi_{i+1}\mid\Xi_i)$ be a Gaussian density with mean $S_{i,i+1}^T\hat\Sigma_i^\dagger \Xi_i$ and covariance 
\[
\hat\Sigma_{i+1}-S_{i,i+1}^T\hat\Sigma_i^\dagger S_{i,i+1}.
\]
 Here $\dagger$ denotes pseudo-inverse. Under \eqref{eq:effSDP2}, the above constructing process is valid. Now we observe that $\Xi_i$ is a zero-mean Gaussian random vector with covariance $\hat\Sigma_i$ and $\mathbb{E}\{\Xi_i\Xi_{i+1}^T\} = S_{i,i+1}$. The proof follows by induction. Finally, let $\hat\Sigma$ denote the covariance matrix of the random vector $[\Xi_0^T, \Xi_1^T,\ldots, \Xi_N^T]^T$. It follows that it satisfies \eqref{eq:noteffSDP2}.

The formulation \eqref{eq:effSDP} is a SDP problem that can be solved efficiently for reasonably large size. The complexity scales linearly as the number $N$ of marginals increase. This is the essential difference twith \eqref{eq:directSDP}, where the complexity scales as $N^6$ in the worst case. 

For fixed $\hat\Sigma_i$, minimizing the cost over $S_{i,i+1}$ is equivalent to solving $N$ separate generalized optimal mass transport problems \cite{CheGeoPav17}. Thus, the optimal solution induces a one-to-one linear map from $\Xi_i$ to $\Xi_{i+}$, which implies that the $4d$ by $4d$ matrix 
	\begin{equation}\label{eq:block}
		\left[\begin{matrix}
		\hat \Sigma_i & S_{i,i+1} \\ S_{i,i+1}^T & \hat \Sigma_{i+1} 
		\end{matrix}\right] 
	\end{equation}
is of rank at most $2d$. Now we repeat the above constructing strategy when we proved the equivalence between \eqref{eq:noteffSDP2} and \eqref{eq:effSDP2}. Since \eqref{eq:block} is of rank at most $2d$, the relation between $\Xi_{i+1}$ and $\Xi_i$ is deterministic, and therefore the covariance corresponding to $p(\Xi_{i+1}\mid\Xi_i)$ is $0$, from which we deduce that the matrix $\hat\Sigma$ that we constructed is of rank at most $2d$. Hence, we have established the following statement.
\begin{lemma}
There exists at least one solution $\hat{\Sigma} \in \mathbb{R}^{2d(N+1) \times 2d(N+1)}$ of the optimization in \eqref{eq:noteffSDP} having rank at most $2d$.
\end{lemma}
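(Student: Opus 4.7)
The plan is to build the desired $\hat\Sigma$ by taking any optimizer of the simplified formulation \eqref{eq:effSDP} and lifting it to a global covariance via a Gaussian Markov-chain construction, exploiting the fact that the pairwise optimal couplings that arise are of Monge type.

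First, I would pick an optimal tuple $(\hat\Sigma_0,\ldots,\hat\Sigma_N,S_{0,1},\ldots,S_{N-1,N})$ for \eqref{eq:effSDP}; existence follows from the standard compactness argument since the cost is a quadratic form on a compact (by the constraints fixing the $\Sigma_i$) and closed subset of p.s.d.\ matrices. Now for each $i$, fix the block diagonals $\hat\Sigma_i,\hat\Sigma_{i+1}$ and consider the restricted optimization in $S_{i,i+1}$ alone: this is exactly a two-marginal Gaussian problem with quadratic cost $\mathrm{Tr}(Q\hat\Sigma_{i+1}+\Phi^T Q\Phi\hat\Sigma_i-2Q\Phi S_{i,i+1})$ subject to the block positivity constraint in \eqref{eq:effSDP2}. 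By the generalized optimal transport analysis for Gaussians cited in the paper (\cite{CheGeoPav17}), the optimal cross-covariance $S_{i,i+1}$ is induced by a \emph{deterministic} linear map $T_i:\R^{2d}\to \R^{2d}$, i.e.\ $\Xi_{i+1}=T_i\Xi_i$ in distribution. Equivalently, the block matrix \eqref{eq:block} has rank at most $2d$.

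Next, I would reproduce (and specialize) the Markov-chain construction already used in the paper to prove the equivalence of \eqref{eq:noteffSDP} and \eqref{eq:effSDP}. Set $\Xi_0\sim N(0,\hat\Sigma_0)$ and define recursively $\Xi_{i+1}:=S_{i,i+1}^T\hat\Sigma_i^\dagger \Xi_i$; because the block in \eqref{eq:block} has rank at most $2d$, the Schur complement $\hat\Sigma_{i+1}-S_{i,i+1}^T\hat\Sigma_i^\dagger S_{i,i+1}$ vanishes, so the would-be conditional noise has zero covariance and the recursion is indeed deterministic. By induction every $\Xi_i$ is then a linear image of $\Xi_0$, hence the joint random vector $(\Xi_0^T,\ldots,\Xi_N^T)^T$ takes values in a subspace of dimension at most $2d$, and its covariance $\hat\Sigma$ automatically has rank $\leq 2d$. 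A short induction (identical to the one the authors outline just before the lemma) verifies that $\mathrm{Cov}(\Xi_i)=\hat\Sigma_i$ and $\mathrm{Cov}(\Xi_i,\Xi_{i+1})=S_{i,i+1}$, so $\hat\Sigma$ satisfies the constraint \eqref{eq:noteffSDP2} and attains the value $c_{\mathrm{opt}}$ of \eqref{eq:noteffSDP}.

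The main obstacle I anticipate is justifying cleanly the Monge (rank $\leq 2d$) property of each pairwise block without circularity: one must ensure that one-sided conditioning on $\Xi_i$ (or $\Xi_{i+1}$) yields an absolutely continuous marginal in the relevant subspace so the Gaussian Brenier-type result applies, and this requires working on the range of $\hat\Sigma_i$ via the pseudo-inverse rather than on all of $\R^{2d}$. Once this is handled (e.g.\ by restricting to the supports and using the pseudo-inverse throughout, exactly as the authors do in their equivalence argument), the remaining verifications are routine block-matrix computations and the rank bound $2d$ follows.
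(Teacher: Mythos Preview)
Your proposal is correct and follows essentially the same route as the paper: take an optimizer of \eqref{eq:effSDP}, invoke \cite{CheGeoPav17} to see that for fixed $\hat\Sigma_i,\hat\Sigma_{i+1}$ the optimal $S_{i,i+1}$ corresponds to a deterministic linear map (so the $4d\times 4d$ block \eqref{eq:block} has rank at most $2d$), and then rerun the Markov-chain construction used in the equivalence argument with zero conditional covariance to obtain a global $\hat\Sigma$ of rank at most $2d$. Your added care about working with pseudo-inverses on the range of $\hat\Sigma_i$ when these may be degenerate is a reasonable refinement, but the overall argument is the paper's.
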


Finally, the optimal selection for the covariance of $\Xi_t$, as a function of $t$, that we denote by $\hat{\mathbf \Sigma}(t)$, is
	\begin{eqnarray}\label{eq:densityflow}
		 &&M(t-i,0)\Phi(0,t-i)^T\hat\Sigma_i^{-1/2}\left[-\hat\Sigma_i^{1/2}\Phi^T 
		Q\Phi\hat\Sigma_i^{1/2}+(\hat\Sigma_i^{1/2}\Phi^T Q\hat\Sigma_{i+1}Q\Phi
		\hat\Sigma_i^{1/2})^{1/2}\right.
		\\&&+\left.\hat\Sigma_i^{1/2}\Phi(t-i,0)^T
        M(t-i,0)^{-1}\Phi(t-i,0)\hat\Sigma_i^{1/2}\right]^2 \hat\Sigma_i^{-1/2}\Phi(0,t-i)M(t-i,0)\nonumber
	\end{eqnarray}  
for $i\le t\le i+1$ any $0\le i \le N-1$, see \cite{CheGeoPav17}. Here 
	\[
		\Phi(t,0)= 
		\left[\begin{matrix}
		1 & t\\ 0 & 1
		\end{matrix}\right],
		\quad
		\Phi(0,t)=\Phi(t,0)^{-1},
	\]
and
	\[
		M(t,0) = 
		\left[\begin{matrix}
		t^3/3 & t^2/2\\
		t^2/2 & t
		\end{matrix}\right].
	\]
The covariance $\Sigma_t$ for $X_t$ is the $(1,1)$-block of $\hat{\mathbf \Sigma}(t)$. By combining the interpolations of the means and the covariances, we conclude that the cubic spline interpolation for the $N+1$ Gaussian marginals is a Gaussian density flow with mean $m(t)$ and covariance $\Sigma_t$ for $0\le t\le N$.

\section{Numerical examples}\label{sec:numerical}
In order to illustrate the framework, we concluded with numerical examples of density-curves that interpolate a set of specified Gaussian marginals.
For simplicity we consider the marginals to be $1$-dimensional and have zero-mean, and we focus on how the density-curve interpolates the respective variances.
We generate our initial data (a set of variances) randomly, and then solve \eqref{eq:effSDP} to obtain the variances corresponding to density-curve through \eqref{eq:densityflow}. Figures \ref{fig:N5}, \ref{fig:N10}, and \ref{fig:N100} depict results for different values of $N$. 
It is noted that the one-dimensional curves shown in these plots, which deligneate the values of interpolating-variance as function of $t$, differ from cubic splines on $\R$; cubic splines would not preserve positivity in general whereas the construction in (\ref{eq:effSDP}-\ref{eq:densityflow}) obviously does.
\begin{figure}[h]
\includegraphics[width=0.6\textwidth]{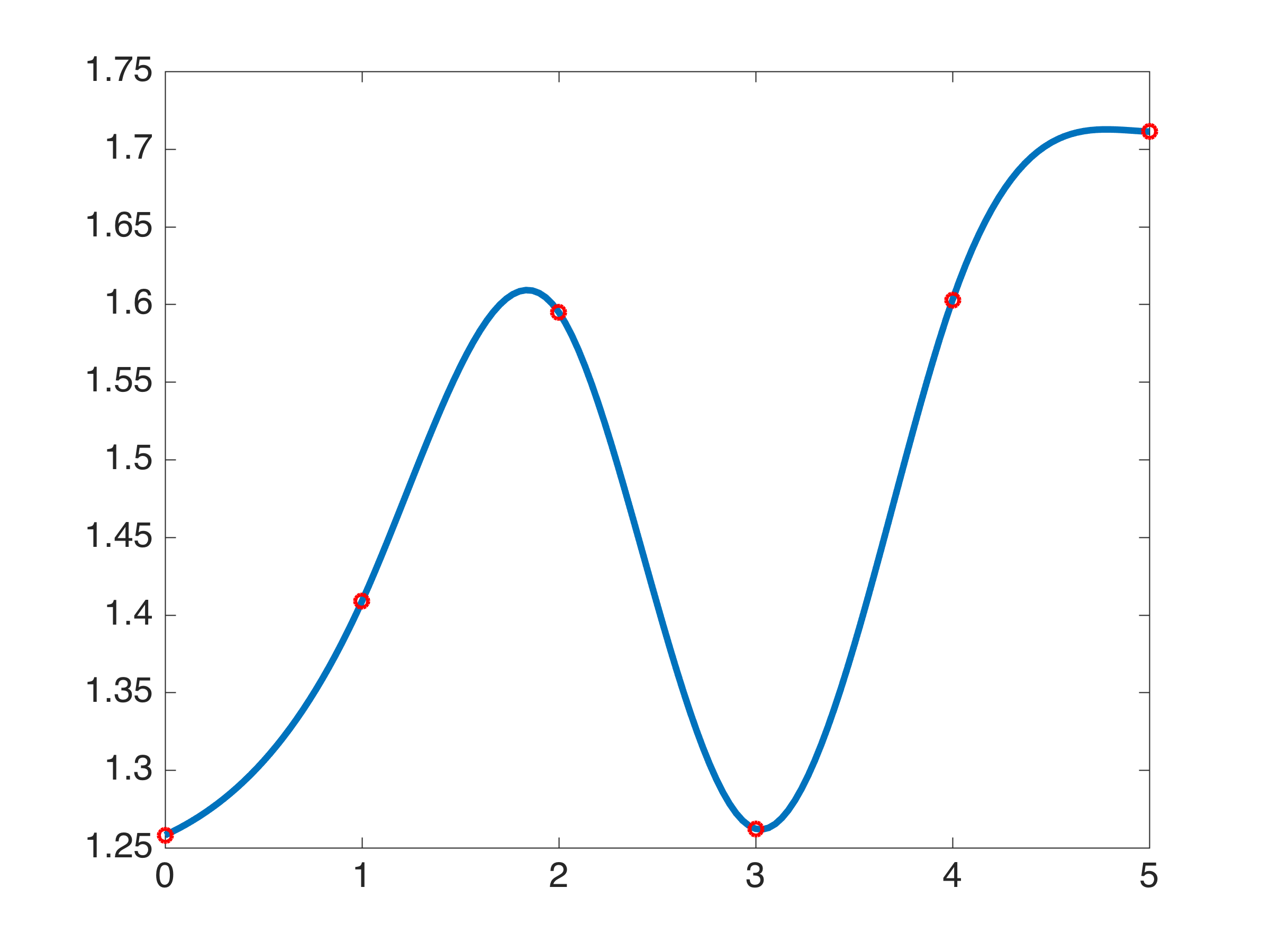}
\caption{Interpolation of covariances: $N=5$}
\label{fig:N5}
\end{figure}
\begin{figure}[h]
\includegraphics[width=0.6\textwidth]{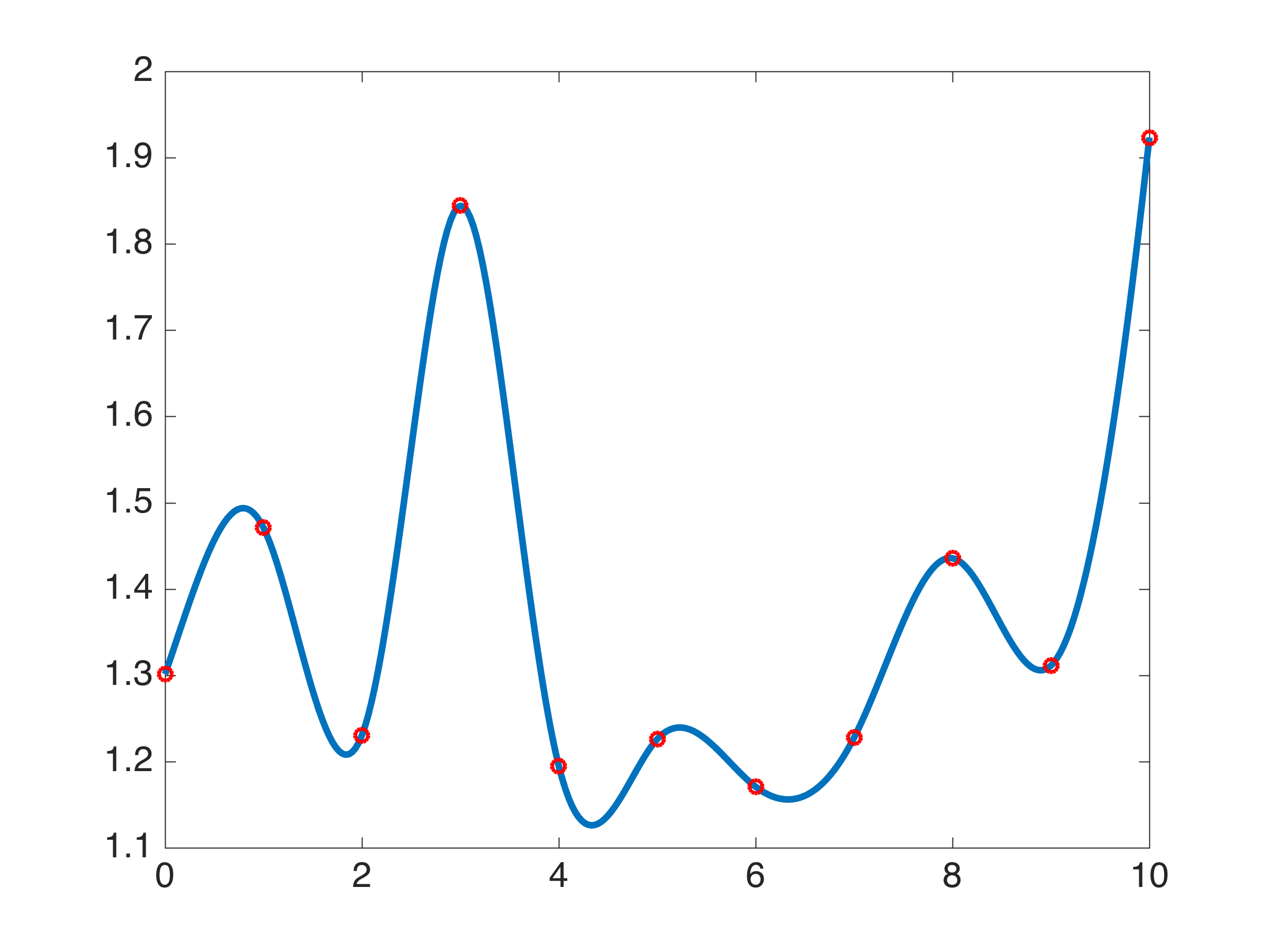}
\caption{Interpolation of covariances: $N=10$}
\label{fig:N10}
\end{figure}
\begin{figure}[h]
\includegraphics[width=0.6\textwidth]{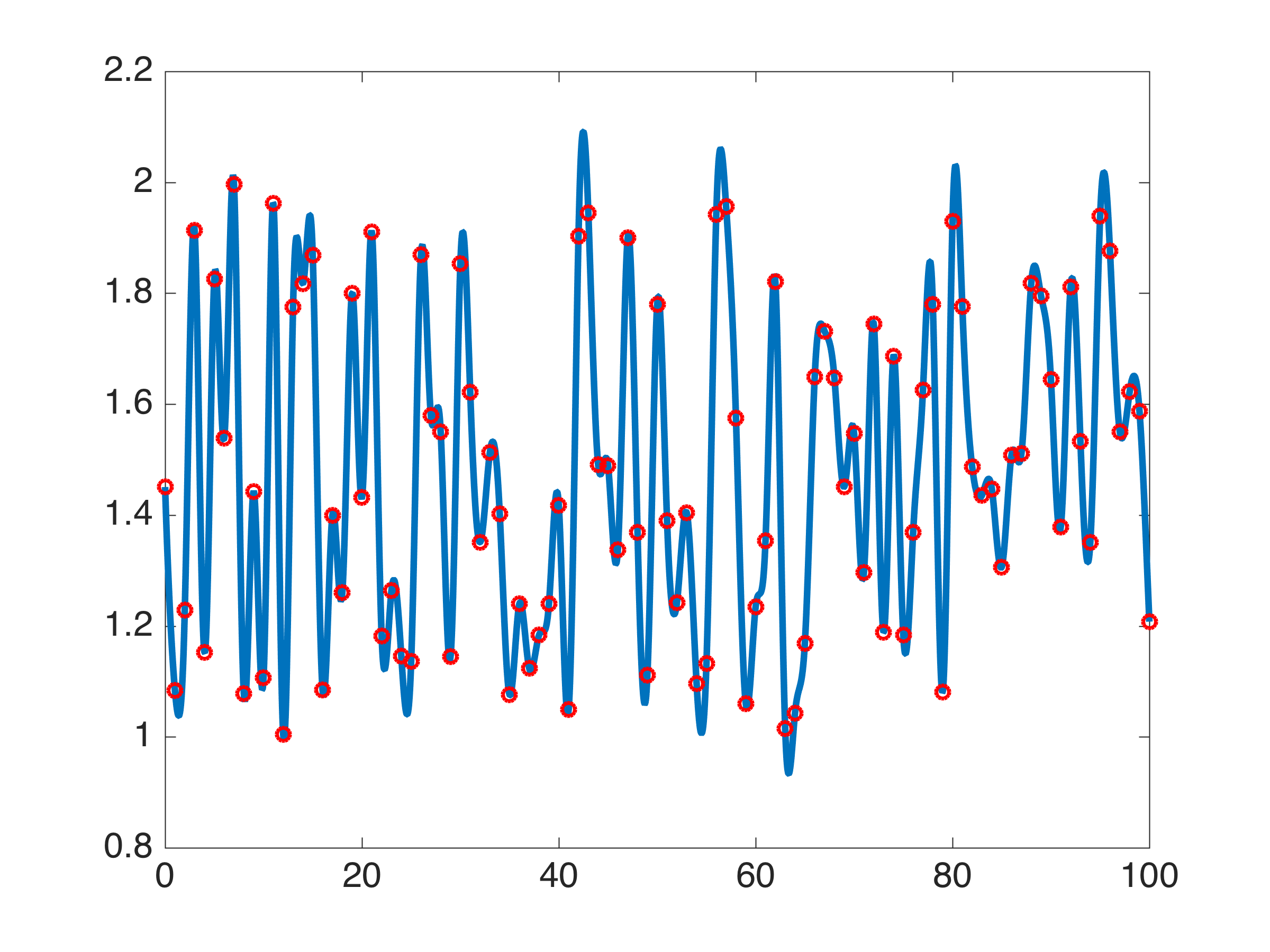}
\caption{Interpolation of covariances: $N=100$}
\label{fig:N100}
\end{figure}


\bibliographystyle{plain}
\bibliography{Ref}

\end{document}